\documentclass[envcountsame,runningheads]{llncs}
\title{$2$-word-$\pi$-representable Graphs} 

\usepackage{todonotes}
\usepackage{hyperref} 
\usepackage{amsmath,amsfonts,amssymb,mathtools} 
\usepackage{cleveref}
\Crefname{algocf}{Algorithm}{Algorithms}

\usepackage[T1]{fontenc}

\usepackage{graphicx}

\usepackage[linesnumbered,vlined,ruled]{algorithm2e}

\newif\ifpaper
\paperfalse 

\usepackage{complexity}
\usepackage{xspace}

\crefalias{proposition}{proposition}

\DeclareSymbolFont{Shuffle}{U}{shuffle}{m}{n}
\DeclareFontFamily{U}{shuffle}{}
\DeclareFontShape{U}{shuffle}{m}{n}{%
  <-8>shuffle7%
  <8->shuffle10%
}{}
\DeclareMathSymbol\shuffle{\mathbin}{Shuffle}{"001}
\DeclareMathSymbol\cshuffle{\mathbin}{Shuffle}{"002}

\def\nth#1{#1$^{\mbox{\tiny th}}$}

\def\ta{\mathtt{a}}
\def\tb{\mathtt{b}}
\def\tc{\mathtt{c}}
\def\td{\mathtt{d}}
\def\te{\mathtt{e}}
\def\tf{\mathtt{f}}
\def\tg{\mathtt{g}}

\def\tr{\mathtt{r}}
\def\ts{\mathtt{s}}
\def\tx{\mathtt{x}}

\def\proj#1#2{\pi_{#1,#2}}

\DeclareMathOperator{\alphabet}{alph}
\DeclareMathOperator{\al}{\alphabet}
\DeclareMathOperator{\letters}{\alphabet}
\DeclareMathOperator{\Fact}{Fact}

\DeclareMathOperator{\IE}{IE}
\DeclareMathOperator{\IES}{IES}

\DeclareMathOperator{\Deg}{deg}

\newcommand{\N}{\mathbb{N}}

\newcommand{\emptyWord}{\varepsilon}

\renewcommand{\alph}{\alphabet}

\newcommand{\longversion}[1]{#1}
\newcommand{\shortversion}[1]{}

\longversion{\usepackage[appendix=inline]{apxproof}}
\shortversion{\usepackage{apxproof}}%
\shortversion{}

\ifpaper
\newcommand{\colorOne}{black}
\newcommand{\colorThree}{black}
\newcommand{\colorTwo}{black}
\newcommand{\colorFour}{black}
\newcommand{\colorFive}{black}
\newcommand{\colorSix}{black}
\else
\definecolor{darkRed}{RGB}{182, 31, 31}
\newcommand{\colorOne}{darkRed}
\definecolor{darkOrange}{RGB}{218, 78, 1}
\newcommand{\colorThree}{darkOrange}
\definecolor{darkYellow}{RGB}{239, 108, 0}
\newcommand{\colorTwo}{darkYellow}
\definecolor{darkGreen}{RGB}{0, 102, 0}
\newcommand{\colorFour}{darkGreen}
\newcommand{\colorFive}{blue}
\definecolor{darkPurple}{RGB}{106, 27, 154}
\newcommand{\colorSix}{darkPurple}
\fi

\usepackage{currfile}
\usepackage{lineno}
\usepackage{filehook}
\usepackage{xstring}

\newtheorem{construction}[theorem]{Construction}

\newcommand{\filestack}{{main.tex}, bottom}  

\newif\ifinwork
\inworkfalse

\ifinwork
\makeatletter
\def\getcallerfile#1,#2\relax{%
  \def\callerfile{#2}%
  \texttt{ ← #1:\the\inputlineno}%
}
\def\popfilefromstack#1,#2\relax{%
  \xdef\filestack{#2}%
}
\makeatother

\AtBeginOfInputs{%
  \noindent\textcolor{red}{%
    \texttt{Begin file: \currfilename}%
    \expandafter\getcallerfile\filestack\relax
  }\\
  \xdef\filestack{{\currfilename},\filestack}%
}

\AtEndOfInputs{%
  \expandafter\popfilefromstack\filestack\relax
  \noindent\textcolor{red}{\texttt{End file: \currfilename}}\\[0.5em]%
}
\fi

\definecolor{lime}{HTML}{A6CE39}
\DeclareRobustCommand{\orcidicon}{%
	\begin{tikzpicture}
		\draw[lime, fill=lime] (0,0) 
		circle [radius=0.16] 
		node[white] {{\fontfamily{qag}\selectfont \tiny ID}};
		\draw[white, fill=white] (-0.0625,0.095) 
		circle [radius=0.007];
	\end{tikzpicture}
	\hspace{-2mm}
}

\foreach \x in {A, ..., Z}{%
	\expandafter\xdef\csname orcid\x\endcsname{\noexpand\href{https://orcid.org/\csname orcidauthor\x\endcsname}{\noexpand\orcidicon}}
}

\definecolor{DarkGreen}{RGB}{0,100,0}

\title{$2$-word-$\pi$-representable Graphs}
\author{Duncan Adamson\inst{1}\orcidD{}\and Amanita Dietz\inst{2} \and
	Pamela Fleischmann\inst{2}\orcidA{} \and
	Annika Huch\inst{2}\orcidB{} \and
	Silas Cato Sacher\inst{3}\orcidC{}}
\authorrunning{D. Adamson et al.}
\institute{University of St Andrews, UK\\
	\email{duncan.adamson@st-andrews.ac.uk}\and Department of Computer Science, Kiel University, Germany
	\email{stu215770@mail.uni-kiel.de, \{fpa,ahu\}@informatik.uni-kiel.de} \and
	Universit\"at Trier, Fachbereich IV, Informatikwissenschaften, Germany\\
	\email{sacher@uni-trier.de}}

\begin{document}
	
	\maketitle              
	%
	\begin{abstract}
This paper investigates the new notion of $2$-word-$\pi$-repre\-sentable graphs: the nodes of the graph correspond to the letters of the two words and there exists an edge between two nodes if the projections of any two letters of both words are equal. The benefit of not only using one word for a representation as introduced by Kitaev and Pyatkin is that every graph is $2$-word-$\pi$-representable. We present an algorithm that returns two representing words for any graph.
Aside, we show that every permutation graph is representable by two $1$-uniform words and give constructions how graph operations on $2$-word-$\pi$-representable graphs can be realised on their representing words which give further insights into the representation of cographs. 
\end{abstract}

		\keywords{Word-representable Graphs, $2$-word-$\pi$-representable Graph, Combinatorics on Words,   $k$-uniform Words, Graph Classes, Graph Operations,  Permutation Graphs, Cographs}
	\paragraph{Acknowledgement.} We would like to thank Philipp Kindermann for patiently sharing his deep knowledge about graph classes with us.
	\section{Introduction}
	In the age of big data, an increasingly common problem is that of storage space, the challenge of representing large data sets in as concise a manner as possible. Graphs in particular form one of the most commonly used and fundamental structures, and one that suffers from the problem of storage acutely, with the most common representation, the adjacency matrix, requiring space quadratic relative to the number of nodes within the graph. One commonly studied solution introduced by Kitaev and Pyatkin \cite{KitPya2008} to this problem are \emph{word-representable graphs}, a means of representing graphs through an alphabet of nodes. Initially motivated by computational algebra, these have been studied across a wide spectrum of areas in computer science,  including graph theory, language theory \cite{FenFFKS2026,FenFFMS2026}, and even in combinatorics on words \cite{das2024squarefreewordrepresentationwordrepresentablegraphs,FleHLN2024,böll2025wordrepresentablegraphslocalitywords,das2025wordrepresentabilitycobipartitegraph,das2025pcompletesquarefreewordrepresentationwordrepresentable}. 


A graph is \emph{word-representable} if there exists a word whose letters are the nodes and the edges in the graph exactly correspond to the pairs of letters that are alternating in the word: two distinct letters are called \emph{alternating} if the removal of all other letters does not yield any two consecutive occurrences of the same letter. See \cite{KitLoz2015,Kit2017} for an in-depth overview. For instance, the word $\mathtt{rescues}$ represents the graph from \Cref{fig:graphwordrepKitaev}: looking at the nodes (letters) $\te$ and $\ts$, we get
the projection $\mathtt{eses}$ and since $\te$ and $\ts$ alternate, we have an edge; on the other hand if we
take $\te$ and $\tr$, we get the projection $\mathtt{ree}$ and since we have the square $\te\te$, we do not have an
edge between $\te$ and $\tr$.

\begin{figure}
	\begin{minipage}{.4\textwidth}
		\centering
		\begin{tikzpicture}[scale=0.55]
			\node (s) at (-2-1,0+1) {$G$};
			\node[shape=circle,draw=black] (s) at (-2,0) {$\mathtt s$};
			\node[shape=circle,draw=black] (e) at (-2,-2) {$\mathtt e$};
			\node[shape=circle,draw=black] (c) at (0,0) {$\mathtt c$};
			\node[shape=circle,draw=black] (r) at (2,0) {$\mathtt r$};
			\node[shape=circle,draw=black] (u) at (2,-2) {$\mathtt u$};
			
			\path [-] (s) edge node[left] {} (e);
			\path [-] (s) edge node[left] {} (u);
			\path [-] (s) edge node[left] {} (c);
			\path [-] (e) edge node[left] {} (c);
			\path [-] (c) edge node[left] {} (r);
			\path [-] (c) edge node[left] {} (u);
			\path [-] (r) edge node[left] {} (u);
			\path [-] (e) edge node[left] {} (u);
		\end{tikzpicture} \\
		$w = \mathtt{rescues}$
		\caption{$G$ represented by $w$.}
		\label{fig:graphwordrepKitaev}
	\end{minipage}%
	\begin{minipage}{.6\textwidth}
		\centering
		\begin{tikzpicture}[scale=0.55]
			\node (s') at (-2+8-1,0+1) {$G'$};
			\node[shape=circle,draw=black] (s') at (-2+8,0) {$\mathtt s$};
			\node[shape=circle,draw=black] (e') at (-2+8,-2) {$\mathtt e$};
			\node[shape=circle,draw=black] (c') at (0+8,0) {$\mathtt c$};
			\node[shape=circle,draw=black] (r') at (2+8,0) {$\mathtt r$};
			\node[shape=circle,draw=black] (u') at (2+8,-2) {$\mathtt u$};
			
			\path [-] (e') edge node[left] {} (c');
			\path [-] (e') edge node[left] {} (u');
			\path [-] (s') edge node[left] {} (u');
			\path [-] (s') edge node[left] {} (c');
			\path [-] (c') edge node[left] {} (u');
		\end{tikzpicture} \\
		$w = \mathtt{rescues}$, $v = \mathtt{secures}$
		\caption{$G'$ $2$-word-$\pi$-represented by $w$ and $v$.}
		\label{fig:2-word-pi-represented}
	\end{minipage}

\end{figure}
This definition allows many graph classes to be represented in essentially linear space relative to the number of nodes, a stark improvement on the quadratic size required by the adjacency matrix.
Besides the various applications of word-representablility in periodic scheduling \cite{HalKitPya2010,KitLoz2015}, topology \cite{DBLP:journals/dcg/OliverosT25}, and the power domination problem in physics \cite{chandrasekaran2019k}, word representable graphs are also of a wide theoretical interest. They generalise several graph classes such as circle graphs, $3$-colourable graphs, and comparability graphs and possess interesting properties like the polynomial time solvability of their maximum clique problem \cite{HalKitPya2016,KitLoz2015}. Moreover, there is fundamental research about grid graphs
(cf. e.g., \cite{glen2018colourability}) and split graphs (cf. e.g., \cite{kitaev2021semitransitive}).
Staying in the field of word-representability by itself, it was shown that a graph is word-representable iff it admits a semi-transitive orientation \cite{HalKitPya2016}.
In \cite{FleHLN2024}, the authors connected the set of $k$-local words and word-representable graphs. This line of research was continued in \cite{böll2025wordrepresentablegraphslocalitywords}.

Nevertheless, not every graph is word-representable in the classical sense. The smallest non-word representable graph is the wheel graph $W_5$ on $6$ nodes~\cite{KitPya2008,KitLoz2015} (see \Cref{graph:w5}). 
An asymptotic result about the number of word-representable graphs can be found in \cite{DBLP:journals/dam/CollinsKL17}.
Importantly, the decision problem whether a graph is word-representable by this definition is NP-complete.
As a result of this challenge, other approaches to represent graphs have become increasingly studied. At a high level,  alternation can be seen as an avoidance of squares, and thus several works \cite{DBLP:journals/combinatorics/JonesKPR15,kitaev2015existence} investigated graphs represented by words avoiding other patterns. Alternatively, one can relax the alternation condition, allowing some number of violations of alternations in the representing word \cite{cheon2018k11representable,On11AndMulti}. One may also move beyond representing graphs by a single word, to instead do so by a set at most $k$ words, each representing a word-representable subgraph of $G$, such that their union is $G$ \cite{Multi,Multi2,On11AndMulti}. Finally, recent work has looked at word-representable graphs from a language theoretic point of view is presented in \cite{FenFFKS2026,FenFFMS2026}.

\begin{figure}
	\centering
	
	\begin{minipage}{.28\textwidth}
		\centering
		\begin{tikzpicture}
			\node (a) {$\ta$};
			\node[above right=0.05cm and 0.05cm of a] (c) {$\tc$};
			\node[above left=0.05cm and 0.05cm of a] (b) {$\tb$};
			\node[below=0.05cm of a] (d) {$\td$};
			\node[below left= 0.3cm and 1cm of a] (e) {$\te$};
			\node[above =0.9cm of a] (f) {$\tf$};
			\node[below right= 0.3cm and 1cm of a] (g) {$\tg$};
			\draw (a) -- (b);
			\draw (a) -- (c);
			\draw (a) -- (d);
			\draw (a) -- (e);
			\draw (a) -- (f);
			\draw (a) -- (g);
			\draw (b) -- (c);
			\draw (b) -- (d);
			\draw (b) -- (e);
			\draw (c) -- (d);
			\draw (c) -- (f);
			\draw (d) -- (g);
			\draw (e) -- (f);
			\draw (e) -- (g);
			\draw (f) -- (g);
		\end{tikzpicture}
		\caption{}
		\label{graph:non-representable_2}
		$w=\mathtt{bfcgdedfa}$\\
		$v=\mathtt{fgbecdfda}$
	\end{minipage}%
	\begin{minipage}{.28\textwidth}
		\centering
		\begin{tikzpicture}
			\node (a) {$\ta$};
			\node[above right=0.05cm and 0.05cm of a] (c) {$\tc$};
			\node[above left=0.05cm and 0.05cm of a] (b) {$\tb$};
			\node[below=0.05cm of a] (d) {$\td$};
			\node[below left= 0.3cm and 1cm of a] (e) {$\te$};
			\node[above =0.9cm of a] (f) {$\tf$};
			\node[below right= 0.3cm and 1cm of a] (g) {$\tg$};
			\draw (a) -- (b);
			\draw (a) -- (c);
			\draw (a) -- (d);
			\draw (a) -- (e);
			\draw (a) -- (f);
			\draw (a) -- (g);
			\draw (b) -- (e);
			\draw (c) -- (f);
			\draw (d) -- (g);
			\draw (e) -- (f);
			\draw (e) -- (g);
			\draw (f) -- (g);
		\end{tikzpicture}
		\caption{}
		\label{graph:non-representable_3}
		$w=\mathtt{edbgdcfa}$\\
		$v=\mathtt{dcegfdba}$
	\end{minipage}%
	\begin{minipage}{.28\textwidth}
		\centering
		\begin{tikzpicture}
			\node (a) {$\ta$};
			\node[above right=0.05cm and 0.05cm of a] (c) {$\tc$};
			\node[above left=0.05cm and 0.05cm of a] (b) {$\tb$};
			\node[below=0.05cm of a] (d) {$\td$};
			\node[below left= 0.3cm and 1cm of a] (e) {$\te$};
			\node[above =0.9cm of a] (f) {$\tf$};
			\node[below right= 0.3cm and 1cm of a] (g) {$\tg$};
			\draw (a) -- (b);
			\draw (a) -- (c);
			\draw (a) -- (d);
			\draw (a) -- (e);
			\draw (a) -- (f);
			\draw (a) -- (g);
			\draw (b) -- (e);
			\draw (c) -- (d);
			\draw (c) -- (f);
			\draw (d) -- (g);
			\draw (e) -- (f);
			\draw (e) -- (g);
			\draw (f) -- (g);
		\end{tikzpicture}
		\caption{}
		\label{graph:non-representable_4}
		$w=\mathtt{febdgcfa}$\\
		$v=\mathtt{dfcegfba}$
	\end{minipage}
\end{figure}

\paragraph{Own Contribution.} 
To obtain a more powerful definition of word-representability, we generalised this notion on two words (distinct to the work done on words as subgraphs \cite{Multi,Multi2}). A graph is \emph{$2$-word-$\pi$-representable} if there exist two words $w,v$ having all the graph's nodes as letters such that the projection
$\pi_{\ta,\tb}$ on any two letters $\ta,\tb \in \Sigma$ of $w$ and $v$ are equal iff there is an edge between the corresponding nodes. Notice that the association of vertices and the representing word's alphabet is kept analogous as in the classical word-representability. Distinct from the classic notion, we are generalising the representablility to two words: instead of the fixed pattern of alternation we consider the projections of two words. Equal projections are in our opinion the easiest way to describe edges using two words.
For example, the graph represented by $\mathtt{rescues}$ and $\mathtt{secures}$ is given in \Cref{fig:2-word-pi-represented}.
Looking at $\ts$ and $\te$, we get the projections $\pi_{\te,\ts}(\mathtt{rescues})=\mathtt{eses}$ and
$\pi_{\te,\ts}(\mathtt{secures})=\mathtt{sees}$. Since they differ, there is no edge between $\te$ and $\ts$. On the
other hand, we have $\pi_{\te,\tc}(\mathtt{rescues})=\te\tc\te$ and $\pi_{\te,\tc}(\mathtt{secures})=\te\tc\te$ and therefore, we have an edge between $\te$ and $\tc$.
Note that the four graphs in \Cref{graph:non-representable_2,graph:non-representable_3,graph:non-representable_4,graph:w5} are non-representable by the classic model~\cite{KitPya2008}, showing the power of out approach.
Beyond the introduction of $2$-word-$\pi$-representable words, we prove that every graph $G= (V,E)$ is $2$-word-$\pi$-representable by two words of length $|V| \cdot |\overline{E}|$ where $\overline{E}$ denotes the complement of the edges of $G$ w.r.t. the complete graph. Further, we present an algorithm for constructing a pair of representing words. Since the na\"ive construction might result in words of even worse length as the adjacency matrix' size, we follow the line of research of classical word-representability \cite{KitPya2008} and investigate $k$-uniform representabiliy by two $k$-uniform words (every letter occurs exactly $k$ times in each word). We show that the class of graphs that are representable by two $1$-uniform words is exactly the group of permutation graphs. 
Moreover, we present how basic graph operations like union and join can be realised for $2$-word-$\pi$-representable graphs by altering the respective words representing the single graphs. Lastly, we give insights into $2$-uniformly $2$-word-$\pi$-representable graphs.

\paragraph{Structure of the work.} 
In \Cref{prelims} we give all the necessary definitions needed for our research. In \Cref{construction1} we show that
every graph is representable by two words with our definitions and we give algorithms to construct these words. Moreover,
we show that the $1$-uniform $2$-word-$\pi$-representable graphs are exactly the permutation graphs. In \Cref{operations}
we investigate operations on graphs.

	\section{Preliminaries}\label{prelims}
	\setcounter{theorem}{0}

Let $\N$ be the natural numbers starting with $1$ and let $\N_0=\N\cup\{0\}$. Define $[n]=\{1,\ldots,n\}$ for all $n\in\N$ and $[n]_0=[n]\cup\{0\}$. For a set $M$ and $k\in\N$
define $\binom{M}{k}=\{S\subseteq M\mid k = |S|\}$. A {\em permutation} is bijection from a set to itself. For $n \in \N$ and a permutation $\sigma: [n] \to [n]$ we use the {\em one-line notation} $\sigma = (\sigma(1)\sigma(2)\dots \sigma(n))$. For example, we denote $1 \mapsto 3, 2 \mapsto 2, 3 \mapsto 1$ by $(321)$. 


\noindent
{\bf Language Theory.}
An {\em alphabet} is a finite set $\Sigma$. The elements of $\Sigma$ are called {\em letters} and the finite concatenation
of letters is called a {\em word}. Let $\Sigma^{\ast}$ be the set of all finite words over $\Sigma$ including the 
{\em empty word} $\emptyWord$ which does not contain any letter. The \emph{alphabet of the word} $w$, denoted 
by $\alph(w)$, is the set of letters contained in the word $w$. The \emph{length} of $w$ is the number of 
letters of $w$ and denoted by $|w|$. Let $\Sigma^k$ be the set of all words over the alphabet $\Sigma$ with 
length $k\in\N$.
Denote the $i^\text{th}$ letter of $w\in\Sigma^{\ast}$ by $w[i]$ for all $i\in[|w|]$ and define $|w|_{\ta}=|\{i\in[|w|]|\,w[i]=\ta\}$ for all $w\in\Sigma^{\ast}$ and $\ta\in\Sigma$.
For a given $k \in \N$, $w \in \Sigma^*$ is \emph{$k$-uniform} if $|w|_\ta = k$ for all $\ta \in \Sigma$.
For all $i,j\in\N$ with $i\leq j\leq|w|$, the word $w[i]\cdots w[j]$ is called a {\em factor} of $w\in\Sigma^{\ast}$ and
denoted by $w[i\ldots j]$. $\Fact(w)$ denotes the set of all factors of a given word $w\in\Sigma^{\ast}$.
Define $w^R=w[|w|]\cdots w[1]$ as the \emph{reverse} of the word $w$. 
A mapping $f:\Sigma^{\ast}\rightarrow\Sigma^{\ast}$
is called {\em morphic} if $f(xy)=f(x)f(y)$ holds for all $x,y\in\Sigma^{\ast}$. Notice that a morphism is well-defined if $f(\ta)$ is given for all $\ta\in\Sigma$. A special morphism is the {\em projection} onto two letters $\ta,\tb\in\Sigma$ defined by $\proj{\ta}{\tb}(\ta)=\ta$, $\proj{\ta}{\tb}(\tb)=\tb$, and $\proj{\ta}{\tb}(x)=\emptyWord$ for $x\in\Sigma\backslash\{\ta,\tb\}$.

\noindent
{\bf Graph Theory.}
A (undirected) \emph{graph}, $G=(V,E)$, consists of a set of \emph{vertices} $V$ and a set of \emph{edges} $E$ with
$E\subseteq \binom{V}{2}$. 
Two graphs $G = (V,E)$ and $H = (V',E')$ are \emph{isomorphic} if there exists a bijection $f: V \to V'$ such that $\{u,v\} \in E$ iff $\{f(u), f(v)\} \in E'$. For a given graph $G$ let $V(G)$ resp. $E(G)$ denote its sets of vertices and edges. In the case $E=\binom{V}{2}$, $G$ is called {\em complete}. $G$ is called {\em empty} if $E=\emptyset$. For all $v\in V$ define the {\em neighbours} of $V$ w.r.t. $G$ by $N_G(v)=\{u\in V|\,\{u,v\}\in E\}$. A vertex $v\in V$ is called {\em isolated}
if $N_G(v)=\emptyset$, and {\em universal} if $N_G(v)\cup\{v\}=V$. Let $\Deg(v)=|N_G(v)|$ denote the {\em degree} of $v\in V$ and define the maximal degree of $G$ by $\Delta(G)=\max_{v\in V}\Deg(v)$. Two edges $e_1,e_2\in E$ are called {\em independent} iff $e_1\cap e_2=\emptyset$.
Given a graph $G=(V,E)$ define the {\em complement graph} by $\overline{G}=(V,\binom{V}{2}\backslash E)$. For two given graphs $G=(V,E), G'=(V',E')$ define their {\em union} $G \cup G'$ by $V(G \cup G') = V \cup V'$ and $E(G \cup C')=E\cup E'$ and define their {\em join} $G \nabla G'$ by $V(G\nabla G') = V \cup V'$ and $E(G\nabla G')=E\cup E'\cup \{\{u,u'\} \mid u \in V, u' \in V')\}$.
Let $C_n$ for $n \in \N$ denote the {\em cycle graph} of length $n$. A graph $G = (V,E)$ is a {\em subgraph} of $G' = (V',E')$ iff $V \subseteq V'$ and $E \subseteq E'$. $G$ is an {\em induced subgraph} of $G'$ iff $E = \binom{V}{2} \cap E'$ and $G$ is a {\em spanning subgraph} of $G'$ iff $V = V'$. Further, a class of graphs $\mathcal G$ is called \emph{hereditary} if it is closed under induced subgraphs. The \emph{speed} if a class of graphs $\mathcal G$ is the function $n \mapsto |\mathcal{G}_n|$ where $\mathcal{G}_n$ describes the set of graphs in $\mathcal{G}$ with vertex set $[n]$.



%
%


\begin{definition}[2-Word-$\pi$-Representable Graphs]\label{def:graph_p}
	Given a pair of words $w,v\in\Sigma^\ast$ with $\alph(w)=\alph(v)=A$, define the graph $G(w,v)=(A,E)$ by $\{\ta,\tb\}\in E$ if and only if $\proj{\ta}{\tb}(v)=\proj{\ta}{\tb}(w)$ for all $\ta,\tb\in A$. If there exists $w,v\in\Sigma^{\ast}$ such that for a given graph $G$, we have $G=G(w,v)$ then $G$ is called {\em 2-word-$\pi$-representable}.
\end{definition}

Note that this representation is symmetric, i.e., $G(w,v) = G(v,w)$ in the above definition.
The smallest graph that is not word-representable in the classical sense (cf. \cite{KitPya2008,KitLoz2015})) is the wheel graph $W_5$
depicted in \Cref{graph:w5}. It can be represented by $w=\mathtt{acbdecef}$ and $v=\mathtt{cdaebecf}$.

    \begin{figure}
        \centering
        \begin{tikzpicture}
            \node (a) {$\ta$};
            \node[below right= 0.6cm and 0.1cm of a] (e) {$\te$};
            \node[right of=a] (f) {$\tf$};
            \node[above of=f] (b) {$\tb$};
            \node[right of=f] (c) {$\tc$};
            
            \node[below left=0.6cm and 0.1cm of c] (d) {$\td$};

            \draw (a) -- (b);
            \draw (a) -- (e);
            \draw (a) -- (f);
            \draw (b) -- (c);
            \draw (b) -- (f);
            \draw (c) -- (d);
            \draw (c) -- (f);
            \draw (d) -- (e);
            \draw (d) -- (f);
            \draw (e) -- (f);
        \end{tikzpicture}
        \caption{$W_5$ represented by  $w=\mathtt{acbdecef}$ and $v=\mathtt{cdaebecf}$.}
        \label{graph:w5}
      \end{figure}%

%


	\section{Construction of Words for a given Graph}\label{construction1}
	In this section, we show that every graph is $2$-word-$\pi$-representable. We start with a couple of auxiliary lemmata. The first result shows that there are infinitely many pairs of words representing the same graph which motivates our search for the shortest pair of two words that we pursue later within this section.

\begin{remark}\label{thm:append_word}
	For all $w,v,u\in\Sigma^\ast$ with $\alph(w)=\alph(v)\supseteq\alph(u)$ we have $G(w,v)=G(wu,vu) = G(uw,uv)$.
	First, notice that  both graphs have the same vertices.
	Since $\proj{\ta}{\tb}(wu)=\proj{\ta}{\tb}(w)\proj{\ta}{\tb}(u)$ and $\proj{\ta}{\tb}(vu)=\proj{\ta}{\tb}(v)\proj{\ta}{\tb}(u)$, we have 
	$\proj{\ta}{\tb}(wu)=\proj{\ta}{\tb}(vu)$ iff $\proj{\ta}{\tb}(w)=\proj{\ta}{\tb}(v)$. 
	Analogously, we obtain $G(w,v) = G(uw,uv)$, concluding the proof. Note that $\alph(w)\supseteq\alph(u)$
	is only necessary to preserve the vertex set.
\end{remark}


Note that \Cref{thm:append_word} holds regardless of whether $u$ is inserted at the front or at the end of pair of words, that is, $G(w, v) = G(u w, u v) = G(w u, v u)$.
The following lemma shows that by appending a $1$-uniform word to $w$ and $v$ resp., we can delete an edge from $G(w,v)$ for some given $w,v\in\Sigma^{\ast}$. To achieve this the two letters corresponding to the deleted edge must have a different order in the words appended to $w$ and $v$. The following
result is weaker than a later result, but it allows an intuitive insight into the deletion of edges.

\ifpaper
\begin{lemma}\label{thm:remove_edge}
	Let $w,v\in\Sigma^\ast$ with $\letters(w)=\letters(v)$ and $\{\ta,\tb\}\in E(G(w,v))$ for some $\ta,\tb\in\letters(w)$. Then, there exists a pair of  $1$-uniform words $u_w,u_v\in\Sigma^\ast$, such that $E(G(wu_w,vu_v))=E(G(w,v))\setminus\{\{\ta,\tb\}\}$.
\end{lemma}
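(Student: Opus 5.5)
Let $A=\letters(w)=\letters(v)$, and write the remaining letters as $A\setminus\{\ta,\tb\}=\{\tc_1,\dots,\tc_m\}$ in some fixed order. I take $u_w=\ta\tb\tc_1\cdots\tc_m$ and $u_v=\tb\ta\tc_1\cdots\tc_m$. Both words are $1$-uniform over $A$, so the vertex set of $G(wu_w,vu_v)$ is again $A$. The idea is to append a common permutation of all letters, except that $\ta$ and $\tb$ are swapped in the copy appended to $v$. Only the pair $\{\ta,\tb\}$ then sees a different order in the suffixes.

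Since projection is a morphism, for every pair $\tx,\ty\in A$ we have $\proj{\tx}{\ty}(wu_w)=\proj{\tx}{\ty}(w)\proj{\tx}{\ty}(u_w)$, and analogously for $v$. I distinguish two cases.

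If $\{\tx,\ty\}\neq\{\ta,\tb\}$, then deleting every letter except $\tx,\ty$ from $u_w$ and from $u_v$ gives the same word. For instance, if $\tx=\ta$ and $\ty=\tc_i$, both projections equal $\ta\tc_i$, because $\ta$ precedes every $\tc_i$ in both suffixes. So $\proj{\tx}{\ty}(u_w)=\proj{\tx}{\ty}(u_v)=:s$. Two words $p s$ and $q s$ with a common suffix $s$ are equal iff $p=q$, by cancelling $s$. This is the argument of \Cref{thm:append_word}, applied only to this pair. Hence the adjacency of $\tx$ and $\ty$ is unchanged.

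If $\{\tx,\ty\}=\{\ta,\tb\}$, then by assumption $\{\ta,\tb\}\in E(G(w,v))$, so $\proj{\ta}{\tb}(w)=\proj{\ta}{\tb}(v)=:p$. The new projections are $p\,\ta\tb$ and $p\,\tb\ta$. These are words of equal length that differ in the last letter, so they are distinct, and the edge is removed. Together the two cases give $E(G(wu_w,vu_v))=E(G(w,v))\setminus\{\{\ta,\tb\}\}$.

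There is no serious obstacle. The only step needing care is the first case: one checks that swapping two adjacent letters in a permutation changes the relative order of exactly that pair and of no other. This holds because every $\tc_i$ lies after both $\ta$ and $\tb$ in both suffixes.
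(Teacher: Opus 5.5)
Your proof is correct and is essentially the paper's argument. The paper appends $x\ta\tb$ and $x\tb\ta$, with $x$ a $1$-uniform word over $\letters(w)\setminus\{\ta,\tb\}$, and uses the same common-suffix cancellation for every other pair; you put the swapped pair first instead of last, which changes nothing, and your $m=0$ case covers $\letters(w)=\{\ta,\tb\}$ uniformly where the paper treats it separately.
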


\else

\begin{proof}
Assume first $|\letters(w)|>2$.
	Let $\ta,\tb\in\alph(w)$ and 
	$x\in(\alph(w)\setminus\{\ta,\tb\})^\ast$ with $|x|_\tc=1$ for all $\tc\in\alph(w)\setminus\{\ta,\tb\}$. 
	Let $u_w=x\ta\tb\quad\mbox{and}\quad u_v=x\tb\ta$.
	These words are $1$-uniform by construction. Let $\tc\in\letters(w)\backslash\{\ta,\tb\}$ and $\td\in\letters(w)\backslash\{\tc\}$. Then we obtain 
	\[
	\proj{\tc}{\td}(wu_w)=\proj{\tc}{\td}(vu_v)\quad\mbox{iff}\quad\proj{\tc}{\td}(w)=\proj{\tc}{\td}(v)\mbox{ and }\proj{\tc}{\td}(u_w)=\proj{\tc}{\td}(u_v). 
	\]
	Since the latter one holds by construction, we have $$\{\tc,\td\}\in E(G(vu_v,wu_w))\quad\mbox{iff}\quad \{\tc,\td\}\in E(G(v,w))$$.
	Also by construction, we get that $\proj{\ta}{\tb}(wu_w)\neq \proj{\ta}{\tb}(v u_v)$ which concludes the proof for $|\letters(w)|>2$.
	
	Assume now $\letters(w)=\{\ta,\tb\}$. In this case $u_w=\ta\tb$ and $u_v=\tb\ta$ witness the claim.\qed
\end{proof}

\fi

With \Cref{thm:remove_edge} we prove that every graph is $2$-word-$\pi$-representable: start with a complete graph containing the required vertices and successively delete all edges that are not needed.

\ifpaper
\begin{theorem}\label{thm:graph_construction}
	For every graph $G=(V,E)$ there exist $w,v\in V^\ast$ with $\alph(w)=\alph(v)=V$, such that $G(w,v)=G$.
\end{theorem}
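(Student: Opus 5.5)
The plan is to start from a pair of words representing the complete graph on $V$ and then delete the non-edges one at a time using \Cref{thm:remove_edge}.

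\textbf{Base case.} Fix any word $x\in V^\ast$ with $|x|_\ta=1$ for all $\ta\in V$, and set $w_0=v_0=x$. Then $\alph(w_0)=\alph(v_0)=V$. Since $\proj{\ta}{\tb}(w_0)=\proj{\ta}{\tb}(v_0)$ holds trivially for all $\ta,\tb$, we have $G(w_0,v_0)=(V,\binom{V}{2})$.

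\textbf{Deleting non-edges.} Enumerate the non-edges $\overline{E}=\binom{V}{2}\setminus E$ as $\{\ta_1,\tb_1\},\dots,\{\ta_m,\tb_m\}$. We build pairs $(w_i,v_i)$ inductively, maintaining two invariants: $\alph(w_i)=\alph(v_i)=V$, and $E(G(w_i,v_i))=\binom{V}{2}\setminus\{\{\ta_j,\tb_j\}\mid j\le i\}$. Suppose the invariants hold for $i-1$. Then $\{\ta_i,\tb_i\}$ is still an edge of $G(w_{i-1},v_{i-1})$, so \Cref{thm:remove_edge} applies. It yields $1$-uniform words $u_w,u_v$ over $V$ such that $E(G(w_{i-1}u_w,v_{i-1}u_v))=E(G(w_{i-1},v_{i-1}))\setminus\{\{\ta_i,\tb_i\}\}$. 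Set $w_i=w_{i-1}u_w$ and $v_i=v_{i-1}u_v$. The appended words are $1$-uniform over $V$, so the alphabets stay equal to $V$ and the vertex set is preserved. After $m$ steps, $G(w_m,v_m)=(V,E)=G$.

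\textbf{Degenerate cases and length.} The lemma's proof treats $|\letters(w)|=2$ separately, so that case is covered. The trivial cases also need checking: if $|V|\le 1$ or $E=\binom{V}{2}$, then $m=0$ and $w=v=x$ already works. The resulting length is $|V|(|\overline{E}|+1)$, which is of the order announced in the introduction. The exact bound $|V|\cdot|\overline{E}|$ can be reached by using the first appended pair $x\ta_1\tb_1$ versus $x\tb_1\ta_1$ itself as the base instead of $x,x$.

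\textbf{Main obstacle.} The step needing the most care is the applicability of \Cref{thm:remove_edge} at each stage. One must confirm that it only removes the targeted edge and never reintroduces a previously deleted non-edge. This holds because the lemma's conclusion is an exact equality of edge sets. Underneath, appending the same projections onto any pair $\{\tc,\td\}\neq\{\ta,\tb\}$ leaves equality or inequality of those projections unchanged, exactly as in the argument of \Cref{thm:append_word}.
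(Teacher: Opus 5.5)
Your proof is correct and is essentially the paper's argument: you represent the complete graph on $V$ and delete the non-edges one at a time with \Cref{thm:remove_edge}, whose exact edge-set equality guarantees that earlier deletions are never undone. Your start from a $1$-uniform pair $x,x$, rather than the paper's $w=v=\varepsilon$, is slightly more careful, since it makes the lemma's hypotheses hold literally at the first step. (For the exact length $|V|\cdot|\overline{E}|$, the base pair should use the lemma's word over $V\setminus\{\ta_1,\tb_1\}$, not your $x$ over all of $V$.)
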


\begin{proof}\label{proof:graph_construction}
Clearly, the complete graph $K_n$ on the vertices $V(K_n) = \{ v_1, \dots, v_n \}$ for some $n \in \N$ is $2$-word-$\pi$-represented by $w := v_1 \cdots v_n$ and $v := w$. 
For every other graph $G = (V,E)$, the words $w,v$ are constructed iteratively by removing one edge after another with \Cref{thm:remove_edge}, starting with $w=v=\emptyWord$. After removing all edges $\{\ta,\tb\}\in\overline{E}$ from the edges $E(K_n)$ of the complete graph $K_n$, only the edges in $E$ are left. This takes $|\overline{E}|$ iterations and since in each iteration $|V|$ letters are added to the words $w,v$, they are of length $|w|=|v|=|\overline{E}|\cdot|V|$. \qed
\end{proof}
\else

\fi

\begin{remark}
Since all graphs are $2$-word-$\pi$-representable the research on {\em nice} representations is motivated. Following the classical word-representability research, we focus on the representation by uniform words.
\end{remark}


\Cref{fig:example_graph_construction} shows an exemplary construction for $G=(V,E)$ with vertices $V=\{\ta,\tb,\tc,\td\}$ and edges $E=\{\{\ta,\tb\},\{\ta,\td\},\{\tc,\td\}\}$. Every step shows the corresponding graph for each iteration (where the new factor for every iteration is separated by a $\cdot$ from the others for a better visibility).

\begin{figure}
	\begin{minipage}{.25\textwidth}
		\centering
		\begin{tikzpicture}
			\node (a) {$\ta$};
			\node[right of=a] (b) {$\tb$};
			\node[below of=a] (c) {$\tc$};
			\node[below of=b] (d) {$\td$};
			\draw (a) -- (b);
			\draw (a) -- (d);
			\draw (a) -- (c);
			\draw (b) -- (c);
			\draw (b) -- (d);
			\draw (c) -- (d);
		\end{tikzpicture}
		
		$w=\emptyWord$\\
		$v=\emptyWord$
	\end{minipage}%
	\begin{minipage}{.25\textwidth}
		\centering
		\begin{tikzpicture}
			\node (a) {$\ta$};
			\node[right of=a] (b) {$\tb$};
			\node[below of=a] (c) {$\tc$};
			\node[below of=b] (d) {$\td$};
			\draw (a) -- (b);
			\draw (a) -- (d);
			\draw (b) -- (c);
			\draw (b) -- (d);
			\draw (c) -- (d);
		\end{tikzpicture}
		
		$w=\tb\td\ta\tc$\\
		$v=\tb\td\tc\ta$
	\end{minipage}%
	\begin{minipage}{.25\textwidth}
		\centering
		\begin{tikzpicture}
			\node (a) {$\ta$};
			\node[right of=a] (b) {$\tb$};
			\node[below of=a] (c) {$\tc$};
			\node[below of=b] (d) {$\td$};
			\draw (a) -- (b);
			\draw (a) -- (d);
			\draw (b) -- (d);
			\draw (c) -- (d);
		\end{tikzpicture}
		$w=\tb\td\ta\tc\cdot\ta\td\tb\tc$\\
		$v=\tb\td\tc\ta\cdot\ta\td\tc\tb$
		
	\end{minipage}%
	\begin{minipage}{.25\textwidth}
		\centering
		\begin{tikzpicture}
			\node (a) {$\ta$};
			\node[right of=a] (b) {$\tb$};
			\node[below of=a] (c) {$\tc$};
			\node[below of=b] (d) {$\td$};
			\draw (a) -- (b);
			\draw (a) -- (d);
			\draw (c) -- (d);
		\end{tikzpicture}
		
		$w=\tb\td\ta\tc\ta\td\tb\tc\cdot\ta\tc\tb\td$\\
		$v=\tb\td\tc\ta\ta\td\tc\tb\cdot\ta\tc\td\tb$
	\end{minipage}
	\caption{Construction of $w,v$ for $G$.}
	\label{fig:example_graph_construction}
\end{figure}


The intuition behind the construction in \Cref{thm:graph_construction} is that, starting with the complete graph, every iteration one of the edges gets deleted by appending a $1$-uniform word as shown in \Cref{thm:remove_edge}. Unfortunately, it outputs words of length $|\Sigma|\cdot|\overline{E}|$ but it works for all graphs. In the following, we present a characterisation of graphs that are 2-word-$\pi$-representable by $1$-uniform words, namely the permutations graphs.

\begin{definition}[Permutation Graphs~\cite{GOLUMBIC2004157}]
A graph $G = ([n],E)$ for some $n \in \N$ is a \emph{permutation graph} iff a permutation $\sigma: [n] \rightarrow [n]$ exists such that $G$ is isomorphic to $G(\sigma) = ([n], E)$ with $\{i,j\} \in E \Leftrightarrow (i - j)(\sigma^{-1}(i) - \sigma^{-1}(j)) < 0$.
\end{definition}

\Cref{fig:permutationgraph_representation} shows an example for a permutation graph and its intersection model. As a next step, we show that the graphs that are representable by two $1$-uniform words are exactly the permutation graphs.

\begin{figure}
	\begin{minipage}{.5\textwidth}
		\centering
		\begin{tikzpicture}[
			dot/.style={
				circle,
				fill=black,
				inner sep=1pt,
				minimum size=0.05cm
			}
			]
			\node [dot, label=$1$, color=\colorOne] (1) {};
			\node [dot, label=$2$, right of=1, color=\colorTwo] (2) {};
			\node [dot, label=$3$, right of=2, color=\colorThree] (3) {};
			\node [dot, label=$4$, right of=3, color=\colorFour] (4) {};
			\node [dot, label=$5$, right of=4, color=\colorFive] (5) {};
			\node [dot, label=$6$, right of=5, color=\colorSix] (6) {};
			\node [dot, label=below:$4$, below of=1, color=\colorFour] (4b) {};
			\node [dot, label=below:$5$, below of=2, color=\colorFive] (5b) {};
			\node [dot, label=below:$6$, below of=3, color=\colorSix] (6b) {};
			\node [dot, label=below:$1$, color=\colorOne, below of=4] (1b) {};
			\node [dot, label=below:$2$, below of=5, color=\colorTwo] (2b) {};
			\node [dot, label=below:$3$, below of=6, color=\colorThree] (3b) {};
			\draw [color=\colorOne](1) -- (1b);
			\draw [color=\colorTwo](2) -- (2b);
			\draw [color=\colorThree](3) -- (3b);
			\draw [color=\colorFour](4) -- (4b);
			\draw [color=\colorFive](5) -- (5b);
			\draw [\colorSix](6) -- (6b);
		\end{tikzpicture}
	\end{minipage}%
	\begin{minipage}{.5\textwidth}
		\centering \begin{tikzpicture}
			\node [color=\colorOne](1) {$1$};
			\node [color=\colorTwo][below of=1] (2) {$2$};
			\node [color=\colorThree][below of=2] (3) {$3$};
			\node [color=\colorFour][right of=1] (4) {$4$};
			\node [color=\colorFive][below of=4] (5) {$5$};
			\node [color=\colorSix][below of=5] (6) {$6$};
			\draw (1) -- (4);
			\draw (1) -- (5);
			\draw (1) -- (6);
			\draw (2) -- (4);
			\draw (2) -- (5);
			\draw (2) -- (6);
			\draw (3) -- (4);
			\draw (3) -- (5);
			\draw (3) -- (6);
		\end{tikzpicture}
	\end{minipage}
	\caption{Intersection model for the permutation graph of the permutation $\pi=(456123)$.}
	\label{fig:permutationgraph_representation}
\end{figure}

\ifpaper
\begin{theorem}\label{1uniformpermutation}
	The set of graphs that are $2$-word-$\pi$-representable by $1$-uniform words is the class of permutation graphs. 
\end{theorem}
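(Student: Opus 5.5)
The plan is to prove both inclusions directly from the definitions. The key observation is that for $1$-uniform words $w,v$ over $A$ and distinct $\ta,\tb\in A$, the projection $\proj{\ta}{\tb}(w)$ is either $\ta\tb$ or $\tb\ta$. Hence $\{\ta,\tb\}\in E(G(w,v))$ holds iff $\ta$ and $\tb$ appear in the same relative order in $w$ and in $v$. So the graphs defined by two $1$-uniform words are exactly the graphs of pairs of permutations of $A$ in which edges join the pairs of letters that are \emph{not} inverted between the two linear orders.

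\textbf{Step 1: normalise.} Since relabelling letters preserves $G(w,v)$ up to isomorphism, I will rename the letters so that $w = 1\,2\cdots n$. Then $v = \sigma(1)\sigma(2)\cdots\sigma(n)$ for some permutation $\sigma$, and position of $i$ in $v$ is $\sigma^{-1}(i)$. Now $i<j$ have the same order in $w$ and $v$ iff $\sigma^{-1}(i)<\sigma^{-1}(j)$. Equivalently, $\{i,j\}\in E(G(w,v))$ iff $(i-j)(\sigma^{-1}(i)-\sigma^{-1}(j))>0$. This is precisely the complement of the permutation graph $G(\sigma)$, so $G(w,v)=\overline{G(\sigma)}$.

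\textbf{Step 2: reverse one word to switch to the complement.} Set $v^R=\sigma(n)\cdots\sigma(1)$, which is still $1$-uniform. The position of $i$ in $v^R$ is $n+1-\sigma^{-1}(i)$, so reversing $v$ flips every relative order. Therefore $G(w,v^R)=\overline{G(w,v)}=G(\sigma)$. Both directions now follow:
\begin{itemize}
\item Given a permutation graph isomorphic to $G(\sigma)$, the $1$-uniform words $w=1\cdots n$ and $v=(\sigma(1)\cdots\sigma(n))^R$ represent it.
\item Conversely, given $1$-uniform $w,v$ representing $G$, first normalise as in Step 1 to get $G\cong\overline{G(\sigma)}$. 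Then write $G=G(w,(v^R)^R)$ and apply the same computation to the pair $(w,v^R)$. If $v^R$ corresponds to the permutation $\tau$, this gives $G\cong G(w,(v^R)^R)=G(\tau)$, so $G$ is a permutation graph.
\end{itemize}
As a sanity check, this also recovers the classical fact that complements of permutation graphs are permutation graphs.

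\textbf{Main obstacle.} The hard part is only bookkeeping, not any real mathematics. The definition of $G(\sigma)$ uses $\sigma^{-1}$ and one-line notation, so I have to check carefully that "the position of $i$ in $v$" really equals $\sigma^{-1}(i)$. I also have to handle the complement/reversal direction correctly, since a sign error would give the complement graph. A check against \Cref{fig:permutationgraph_representation} confirms the convention: with $w=123456$ and $v=(456123)^R=321654$, the pair $1,4$ keeps its order in $v$, so they are not adjacent in $G(w,v)$. That contradicts the figure, where $1$ and $4$ are adjacent. So the figure's graph arises from $v=456123$ without reversal, and I must fix the orientation convention once, explicitly, in the write-up. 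Either way the class of graphs obtained is the same, because it is closed under complement via reversal.
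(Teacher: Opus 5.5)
Your proof is correct and is essentially the paper's argument: you normalise to $w=1\cdots n$, take the permutation whose one-line notation is $v^R$ (the paper's $\sigma(i)=v[n-i+1]$), and use that $\{i,j\}$ is an edge iff $i,j$ have the same relative order in $w$ and $v$, i.e.\ are inverted in $v^R$; your detour through $\overline{G(\sigma)}$ is just the paper's step $\proj{i}{j}(w)=\proj{i}{j}(v)\Leftrightarrow\proj{i}{j}(w)\neq\proj{i}{j}(v^R)$. Your closing sanity check is wrong, though, and no change of convention is needed: $1$ and $4$ keep their relative order in $321654$, so by your own Step~1 they \emph{are} adjacent, which agrees with the figure and with \Cref{cor:2WordPiRepOfPermutationGraph}, whereas $v=456123$ without reversal would give the complement $K_3\cup K_3$.
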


\begin{proof}
	We show, that set of graphs that are $2$-word-$\pi$-representable by $1$-uniform words are exactly the permutation graphs. 
	
	First, let $w$ and $v$ be 1-uniform words such that $\alph(w) = \alph(v)$. W.l.o.g. assume $\alph(w) = [n]$ for some $n\in\N$ and $w = 1 \cdots n$ (otherwise, $w$ is a permutation on $[n]$ and we may use the composition in the following). Let $\sigma: [n] \rightarrow [n]$ such that 
	$\sigma(i)=v[n-i+1]$. Since $v$ is 1-uniform and $\alph(v) = [n]$, $\sigma$ is a permutation. Let $i,j \in [n]$ such that $i \neq j$. Thus, we have
	\begin{align*}
		\{i,j\} \in E(G(w,v)) 
		& \Leftrightarrow \proj{i}{j}(w)=\proj{i}{j}(v) \\
		& \Leftrightarrow \proj{i}{j}(w) \neq \proj{i}{j}(v^R) \\
		& \Leftrightarrow (i < j \wedge \sigma^{-1}(i) > \sigma^{-1}(j)) \vee (i > j \wedge \sigma^{-1}(i) < \sigma^{-1}(j)) \\
		& \Leftrightarrow (i - j)(\sigma^{-1}(i) - \sigma^{-1}(j)) < 0 \\
		& \Leftrightarrow \{i,j\} \in E(G(\sigma)).
	\end{align*}
	Hence $G(w,v) = G(\sigma)$. 
	
	For the other direction, let $n\in\N$, $\sigma:[n] \rightarrow [n]$ be a permutation. Let $w  = 1 \cdots n$ and $v = (\sigma(1) \dots \sigma(n))^R$. The word $w$ is clearly 1-uniform. The word $v$ is 1-uniform, because $\sigma$ is a permutation. We prove $G(w,v) = G(\sigma)$ analogous to the first direction. \qed
\end{proof}

\else

\fi

From the proof of \Cref{1uniformpermutation}, we can conclude that the reverse of the one-line notation of the graph's permutation 
yields a 2-word-$\pi$-representation of the graph. 

\begin{corollary}\label{cor:2WordPiRepOfPermutationGraph}
A permutation graph $G$ given by the permutation $\sigma: [n] \to [n]$ is $2$-word-$\pi$-represented by the word $1 \cdots n$ and the reverse of the one-line notation of $\sigma$ interpreted as a word.
\end{corollary}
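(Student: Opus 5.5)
The plan is to verify the corollary directly on the two words, setting $w = 1\cdots n$ and $v = (\sigma(1)\cdots\sigma(n))^R$, i.e.\ $v[k] = \sigma(n-k+1)$. The argument in the proof of \Cref{1uniformpermutation} already does this; I will just make the position bookkeeping explicit. Both words are $1$-uniform over $[n]$, since $\sigma$ is a bijection, so $\alph(w)=\alph(v)=[n]$ and $G(w,v)$ has vertex set $[n]$, matching $G(\sigma)$.

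Fix $i\neq j$. Because each letter occurs exactly once, $\proj{i}{j}(w)$ and $\proj{i}{j}(v)$ both have length two. Hence they are equal iff $i$ and $j$ appear in the same relative order in $w$ and in $v$. In $w$, $i$ precedes $j$ iff $i<j$.

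For $v$, the letter $\sigma(p)$ sits at position $n-p+1$. So $i$ sits at position $n-\sigma^{-1}(i)+1$, and $i$ precedes $j$ in $v$ iff $\sigma^{-1}(i) > \sigma^{-1}(j)$.

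Combining the two, the projections agree iff $i<j \Leftrightarrow \sigma^{-1}(i)>\sigma^{-1}(j)$. This is exactly $(i-j)(\sigma^{-1}(i)-\sigma^{-1}(j))<0$, the edge condition of $G(\sigma)$. Therefore $G(w,v)=G(\sigma)$, and $G$ is represented by these words, since $G$ is isomorphic to $G(\sigma)$ with vertices relabelled accordingly.

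The only delicate step is the index reversal: checking that reversing the one-line notation turns ``same order'' into the inversion condition. This is where the reverse (rather than $\sigma$ itself) is essential; the unreversed word would represent the complement $\overline{G(\sigma)}$.
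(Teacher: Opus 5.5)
Your proof is correct and follows the paper's argument. The corollary is read off from the proof that the graphs representable by two $1$-uniform words are exactly the permutation graphs. The second direction of that proof sets $w = 1\cdots n$ and $v = (\sigma(1)\cdots\sigma(n))^R$, then compares the relative order of $i$ and $j$ in both words. The only difference is that you place $i$ in $v$ directly at position $n-\sigma^{-1}(i)+1$, rather than going through $\pi_{i,j}(w)\neq\pi_{i,j}(v^R)$ as the paper's first direction does; this spells out the step the paper dismisses as ``analogous''.
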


As an example for \Cref{cor:2WordPiRepOfPermutationGraph} consider the permutation graph illustrated in \Cref{fig:permutationgraph_representation}. This graph is given by the permutation $(456123)$ and it equals $G(123456, (456123)^R) = G(123456, 321654)$.
 
\begin{remark}\label{rem:permutation-graphs-complement}
It is folklore that permutations graphs are closed under graph complement \cite{GOLUMBIC2004157}. For two 1-uniform words $w$ and $v$ such that $\alph(w) = \al(v)$, $\overline{G(w,v)} = G(w,v^R) = G(w^R,v)$. 
\end{remark} 

\ifpaper
The next result follows by the facts that a graph $G$ is a permutation graph iff $G$ and its complement are transitive orientable~\cite{PnuLemEve71} and that a graph is transitive orientable iff it does not contain an induced cycle of odd length~\cite{Gal67}.
\fi
\ifpaper
\begin{corollary}\label{cor:C>5forbiddenSubgraph}
Cycles of length at least five are forbidden induced subgraphs in permutation graphs. 
\end{corollary}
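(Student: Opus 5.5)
We will use the two facts cited just before the statement: $G$ is a permutation graph iff both $G$ and $\overline{G}$ are transitively orientable~\cite{PnuLemEve71}, and a transitively orientable graph contains no induced odd cycle of length at least five~\cite{Gal67}. We also use that permutation graphs form a hereditary class. This can be seen directly with \Cref{1uniformpermutation}: if $G=G(w,v)$ for $1$-uniform words $w,v$ and $S\subseteq V(G)$, then deleting all letters outside $S$ from $w$ and $v$ gives $1$-uniform words. Since projections onto pairs of letters from $S$ are unchanged, these words represent exactly the induced subgraph $G[S]$. Hence it suffices to show that no cycle $C_n$ with $n\ge 5$ is itself a permutation graph.

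\textbf{Odd cycles.} Let $n\ge5$ be odd. $C_n$ is an odd cycle of length at least five, so it is not transitively orientable. By~\cite{PnuLemEve71}, $C_n$ is therefore not a permutation graph.

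\textbf{Even cycles.} Let $n\ge6$ be even. Here we look at the complement $\overline{C_n}$ and aim for a contradiction. Label the vertices $0,\dots,n-1$ cyclically. The vertices $0,2,4$ are pairwise non-adjacent in $C_n$, so they span a triangle in $\overline{C_n}$. Now suppose $\overline{C_n}$ has a transitive orientation. By \Cref{rem:permutation-graphs-complement} it would be a permutation graph, so $C_n$ would be a comparability graph and hence bipartite, which is consistent. So the obstruction must come from the complement side. The plan is to show directly that $\overline{C_n}$ has no transitive orientation, by forcing orientations around the cycle. For $i,j$ non-adjacent in $\overline{C_n}$, i.e.\ $j=i+1$, the $\Gamma$-forcing relation propagates the orientation of the edge $\{i,i+2\}$ around the whole cycle. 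Following this propagation once around the cycle should return with the reversed orientation, which contradicts transitivity.

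\textbf{The case $n=5$.} Since $C_5\cong\overline{C_5}$, this case is already covered by the odd-cycle argument.

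\textbf{Main obstacle.} The expected difficulty is the even case $n\ge 6$, that is, the forcing argument on $\overline{C_n}$. If it becomes messy, a cleaner route is to argue with $1$-uniform words via \Cref{cor:2WordPiRepOfPermutationGraph}. Take a representation $w=1\cdots n$ and a permutation $v$. An induced path $a-b-c$ of $C_n$ with $\{a,c\}\notin E$ forces $b$ to be ordered "between" $a$ and $c$ in a crossing pattern. Following these constraints around the whole cycle yields a cyclic chain of strict inequalities among positions, which is impossible. This argument handles all $n\ge5$ uniformly.
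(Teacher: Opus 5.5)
\textbf{There is a genuine gap: the even case $n\ge 6$ is never proved.} Your heredity argument (deleting letters outside $S$ from $1$-uniform words) and your odd case are fine. The even case is the substantive part of the statement. There $C_n$ is bipartite, hence a comparability graph, so everything hinges on showing that $\overline{C_n}$ is not transitively orientable, and you never do this.

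\begin{itemize}
\item The forcing argument is only announced (``should return''), and the mechanism you describe does not occur. Propagating $(i,i+2)\mapsto(i-1,i+1)$ once around the cycle brings you back to $(0,2)$ with the \emph{same} orientation, not the reversed one.
\item The aside citing \Cref{rem:permutation-graphs-complement} proves nothing. That remark presupposes a $1$-uniform representation, and ``comparability, hence bipartite'' is not a valid implication.
\item Your fallback with $1$-uniform words rests on a false local claim. If $a-b-c$ is an induced path in $G(w,v)$, then $a,b$ and $b,c$ have the same relative order in $w$ and $v$, while $a,c$ do not. Hence $b$ is \emph{not} between $a$ and $c$: it precedes both or follows both, in both words. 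Around the cycle this only forces vertices to alternate between these two types. That parity condition rules out odd cycles but is satisfiable for even ones. No cyclic chain of strict inequalities appears unless you also use non-edges between distant vertices.
\end{itemize}
Those distant non-edges are exactly the asteroidal-triple obstruction the paper uses for $n\ge 6$: co-comparability graphs are AT-free, combined with~\cite{PnuLemEve71}.

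\textbf{How to complete your complement route.} It can be finished, and it then gives a self-contained alternative that works uniformly for all $n\ge 5$, making the separate odd case unnecessary. Realise $\overline{C_n}$ on $\mathbb{Z}_n$ with $x\sim y$ iff $y-x\notin\{0,\pm1\}$. Let $T$ be a transitive orientation; replacing $T$ by $T^{-1}$ if needed, assume $(0,2)\in T$. For $n\ge5$ both $\{-1,2\}$ and $\{-1,1\}$ are edges.
\begin{itemize}
\item If $(2,-1)\in T$, then together with $(0,2)$ transitivity would force $(0,-1)\in T$, but $\{0,-1\}$ is a non-edge. Hence $(-1,2)\in T$.
\item If $(1,-1)\in T$, then together with $(-1,2)$ it would force $(1,2)\in T$, again a non-edge. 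Hence $(-1,1)\in T$.
\end{itemize}
By rotational symmetry, $(i,i+2)\in T$ implies $(i-1,i+1)\in T$, so every $(i,i+2)$ lies in $T$. Then $0\to 2\to 4\to\cdots$ closes into a directed cycle, which a transitive orientation cannot contain. This is the contradiction your sketch was missing: the short chords close into a directed cycle rather than returning reversed. Compared with the paper, this replaces the citation of AT-freeness of co-comparability graphs by a two-step forcing computation.
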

\else

\begin{proof}
Note that the permutation graphs are the intersection of the comparability graph and the co-comparability graph~\cite[Thm. 3]{PnuLemEve71}. $C_5$ is not a permutation graph, because $C_5$ is not a comparability graph. For $n \geq 6$, $C_n$ any three independent vertices in the graph are a asteroidal triple. Therefore, $C_n$ is not a co-comparability graph~\cite[Thm. 1.20]{Gal67} and thus not a permutation graph.\qed
\end{proof}
\fi

Observe that every graph with at most four nodes is a permutation graph. 
Aiming for a shorter length of the two representing words, we continue with some helpful lemmata on edge deletion and manipulation of the representing words. Note that the following two results also hold on non-$1$-uniform words. 

\ifpaper
\begin{lemma}\label{thm:switche_consecutive_letters}
    Let $w,v\in\Sigma^\ast$ with $\alph(w)=\alph(v)$, 
    $\proj{\ta}{\tb}(w)=\proj{\ta}{\tb}(v)$, and $w=x\ta\tb y$ for some $x,y\in\Sigma^{\ast}, \ta, \tb \in \Sigma$. Let $w'=x\tb\ta y$.
    Then $E(G(w,v))\setminus\{\ta,\tb\}=E(G(w',v))$.
\end{lemma}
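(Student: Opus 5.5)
We compare projections pair by pair. The vertex sets of $G(w,v)$ and $G(w',v)$ coincide, since $w'$ is a rearrangement of $w$, so $\alph(w')=\alph(w)=\alph(v)$. Note that $\ta\neq\tb$ is implicit: if $\ta=\tb$ then $w'=w$ and the statement is only about the pair $\{\ta,\tb\}$, so the case is vacuous. We read the set difference as removing the edge $\{\ta,\tb\}$, i.e.\ $E(G(w,v))\setminus\{\{\ta,\tb\}\}$. It then suffices to show two things. First, for every pair $\{\tc,\td\}\neq\{\ta,\tb\}$ we have $\proj{\tc}{\td}(w')=\proj{\tc}{\td}(w)$. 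Second, $\proj{\ta}{\tb}(w')\neq\proj{\ta}{\tb}(v)$.

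For the first point, take $\{\tc,\td\}\neq\{\ta,\tb\}$. Since $\proj{\tc}{\td}$ is a morphism,
\[
\proj{\tc}{\td}(w)=\proj{\tc}{\td}(x)\,\proj{\tc}{\td}(\ta\tb)\,\proj{\tc}{\td}(y),\qquad
\proj{\tc}{\td}(w')=\proj{\tc}{\td}(x)\,\proj{\tc}{\td}(\tb\ta)\,\proj{\tc}{\td}(y).
\]
Because $\{\tc,\td\}\neq\{\ta,\tb\}$, at most one of $\ta,\tb$ lies in $\{\tc,\td\}$. So $\proj{\tc}{\td}(\ta\tb)$ and $\proj{\tc}{\td}(\tb\ta)$ are either both $\emptyWord$ or both the same single letter, and the two projections agree. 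Hence $\{\tc,\td\}\in E(G(w',v))$ iff $\{\tc,\td\}\in E(G(w,v))$.

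For the second point, $\proj{\ta}{\tb}(w')=\proj{\ta}{\tb}(x)\,\tb\ta\,\proj{\ta}{\tb}(y)$ while $\proj{\ta}{\tb}(w)=\proj{\ta}{\tb}(x)\,\ta\tb\,\proj{\ta}{\tb}(y)$. These two words have the same length and differ at position $|\proj{\ta}{\tb}(x)|+1$ because $\ta\neq\tb$. By hypothesis $\proj{\ta}{\tb}(w)=\proj{\ta}{\tb}(v)$, so $\proj{\ta}{\tb}(w')\neq\proj{\ta}{\tb}(v)$ and $\{\ta,\tb\}\notin E(G(w',v))$. Combining both points gives $E(G(w',v))=E(G(w,v))\setminus\{\{\ta,\tb\}\}$.

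The only delicate step is the bookkeeping in the second point: the edge $\{\ta,\tb\}$ is present in $G(w,v)$ by hypothesis, and we must make sure the swap destroys exactly this equality and no other. The position argument handles this without any further case analysis.
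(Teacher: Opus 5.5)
Your proof is correct and follows essentially the same route as the paper. In both, the morphism decomposition shows that every pair other than $\{\ta,\tb\}$ keeps the same projection, and the swap changes $\proj{\ta}{\tb}(w)$, which equals $\proj{\ta}{\tb}(v)$ by hypothesis; your extra points (reading the difference as removing $\{\{\ta,\tb\}\}$, the degenerate case $\ta=\tb$, the explicit differing position) only make precise what the paper leaves implicit.
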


\else

\begin{proof}
    Let $\tc\in\Sigma$ and $\td\in\Sigma\backslash\{\ta,\tb\}$. Then by $\td\neq\ta,\tb$ we have
    \[
    \proj{\tc}{\td}(w)=\proj{\tc}{\td}(x)\proj{\tc}{\td}(\ta\tb)\proj{\tc}{\td}(y)=
    \proj{\tc}{\td}(x)\proj{\tc}{\td}(\tb\ta)\proj{\tc}{\td}(y)=\proj{\tc}{\td}(w').
    \]
    On the other hand, we have $\proj{\ta}{\tb}(w)\neq\proj{\ta}{\tb}(w')$ which proves the claim.\qed
\end{proof}

\fi

The next two results show how two letters in the words can be {\em swapped} in order to get them consecutive such that 
\Cref{thm:switche_consecutive_letters} is applicable.

\ifpaper
\begin{lemma}\label{thm:switche_conscutive_lettersubwords_in_both_words}
    Let $w,v\in\Sigma^\ast$ with $\alph(w)=\alph(v)$ and the factorisations
    \[
    w=\alpha_1x\ta \alpha_2 \quad \mbox{and} \quad v=\beta_1y\ta \beta_2
    \]
    with $\alpha_1,\alpha_2,\beta_1,\beta_2,x,y\in\Sigma^{\ast}$, $\ta\in\Sigma$, such that $|y|_\tb=|x|_\tb$, 
    $|\alpha_1|_\tb = |\beta_1|_\tb$ and $|\alpha_2|_\tb = |\beta_2|_\tb$ for all $\tb \in \Sigma$.
    Set $w'=\alpha_1\ta x\alpha_2$ and $v'=\beta_1\ta y\beta_2$. Then $G(w,v)=G(w',v')$.
\end{lemma}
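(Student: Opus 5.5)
The plan is to compare, for every pair of distinct letters $\tc,\td$, the projections of $w,v$ with those of $w',v'$, and to use that all the blocks involved have matching letter counts. First, $w'$ is a rearrangement of $w$ and $v'$ of $v$, so $\alph(w')=\alph(w)=\alph(v)=\alph(v')$. Hence both graphs have the same vertex set, and only the edge sets need to be compared.

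Fix distinct $\tc,\td\in\alph(w)$ and write $\pi=\proj{\tc}{\td}$. The key observation is a block decomposition: $\pi(w)=\pi(\alpha_1)\,\pi(x\ta)\,\pi(\alpha_2)$ and $\pi(v)=\pi(\beta_1)\,\pi(y\ta)\,\pi(\alpha_2)$, with $\beta_2$ in place of $\alpha_2$ in the last factor of $\pi(v)$. The hypotheses $|\alpha_1|_\tb=|\beta_1|_\tb$, $|x|_\tb=|y|_\tb$ and $|\alpha_2|_\tb=|\beta_2|_\tb$ for all $\tb$ give
\[
|\pi(\alpha_1)|=|\pi(\beta_1)|,\quad |\pi(x\ta)|=|\pi(y\ta)|,\quad |\pi(\alpha_2)|=|\pi(\beta_2)|.
\]
Two concatenations of three blocks with pairwise equal block lengths are equal iff the blocks are equal pairwise. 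So $\pi(w)=\pi(v)$ holds iff $\pi(\alpha_1)=\pi(\beta_1)$, $\pi(x\ta)=\pi(y\ta)$ and $\pi(\alpha_2)=\pi(\beta_2)$. In exactly the same way, $\pi(w')=\pi(v')$ holds iff $\pi(\alpha_1)=\pi(\beta_1)$, $\pi(\ta x)=\pi(\ta y)$ and $\pi(\alpha_2)=\pi(\beta_2)$.

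It remains to check that the middle conditions agree. Since $\pi$ is a morphism, $\pi(x\ta)=\pi(x)\pi(\ta)$ and $\pi(y\ta)=\pi(y)\pi(\ta)$. Cancelling the common suffix $\pi(\ta)$, which may be empty if $\ta\notin\{\tc,\td\}$, gives $\pi(x\ta)=\pi(y\ta)$ iff $\pi(x)=\pi(y)$. Cancelling the common prefix gives $\pi(\ta x)=\pi(\ta y)$ iff $\pi(x)=\pi(y)$. Therefore $\{\tc,\td\}\in E(G(w,v))$ iff $\{\tc,\td\}\in E(G(w',v'))$, and so $G(w,v)=G(w',v')$.

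The only step that needs care is the blockwise splitting of equality. It relies on the letter-count hypotheses being stated for all letters, so that the projected block lengths match for every choice of $\tc,\td$, including the case $\ta\in\{\tc,\td\}$ and the case where $x$ itself contains occurrences of $\ta$. A case distinction on whether $\ta\in\{\tc,\td\}$ is not needed, since the cancellation argument treats both cases uniformly.
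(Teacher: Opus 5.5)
Your proof is correct and is essentially the paper's argument: the letter-count hypotheses force the projected blocks of $\alpha_1,x,\alpha_2$ and of $\beta_1,y,\beta_2$ to have pairwise equal lengths, so equality of projections splits blockwise, and moving $\ta$ from behind $x$ (resp.\ $y$) to the front does not change whether the middle blocks agree. The only difference is organisational: you run one chain of equivalences uniformly over all pairs $\{\tc,\td\}$, whereas the paper first handles pairs not containing $\ta$ (whose projections of $w$ and $v$ are unchanged) and then treats edges and non-edges at $\ta$ separately by case analysis.
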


\else

\begin{proof}
    For all $\tc,\td\in\Sigma\backslash\{\ta\}$, we have $ \proj{\tc}{\td}(w)=\proj{\tc}{\td}(w')$ and $\proj{\tc}{\td}(v)=\proj{\tc}{\td}(v')$ which implies
    \[
    \proj{\tc}{\td}(w)=\proj{\tc}{\td}(v)\quad\mbox{iff}\quad\proj{\tc}{\td}(w')=\proj{\tc}{\td}(v')
    \]
    and thus, the edges in $G(w,v)$ and $G(w',v')$ that are not adjacent with $\ta$ are the same. Let $\tb\in\Sigma\backslash\{\ta\}$.
    
    Now consider the edges adjacent to $\ta$. Let $\tb \in \Sigma\setminus\{\ta\}$ with $\{\ta, \tb\} \in E(G(w,v))$.
    Since $\proj{\ta}{\tb}(\ta) = \ta$, we have 
    $\pi_{\ta,\tb}(w') = \proj{\ta}{\tb}(\alpha_1)\ta \proj{\ta}{\tb}(x)\proj{\ta}{\tb}(\alpha_2)$
    and 
    $\pi_{\ta,\tb}(v') = \proj{\ta}{\tb}(\beta_1)\ta \proj{\ta}{\tb}(y)\proj{\ta}{\tb}(\beta_2)$.   
    We have 
    \begin{align*}
    \proj{\ta}{\tb}(w)=& \proj{\ta}{\tb}(\alpha_1)\proj{\ta}{\tb}(x)\ta \proj{\ta}{\tb}(\alpha_2),\\ 
    \proj{\ta}{\tb}(v)=& \proj{\ta}{\tb}(\beta_1)\proj{\ta}{\tb}(y)\ta\proj{\ta}{\tb}(\beta_2).
    \end{align*}
    The condition $\{\ta, \tb\} \in E(G(w,v))$ implies $\proj{\ta}{\tb}(w) = \proj{\ta}{\tb}(v)$.
    Since $|\alpha_1|_\tc = |\beta_1|_\tc$, $|\alpha_2|_\tc = |\beta_2|_\tc$ and $|y|_\tc=|x|_\tc$ for all $\tc \in \Sigma$, we obtain $\proj{\ta}{\tb}(\alpha_1) = \proj{\ta}{\tb}(\beta_1)$, $\proj{\ta}{\tb}(x) = \proj{\ta}{\tb}(y)$ and $\proj{\ta}{\tb}(\alpha_2) = \proj{\ta}{\tb}(\beta_2)$. Thus 
    \begin{align*}
    \proj{\ta}{\tb}(w')=& \proj{\ta}{\tb}(\alpha_1)\ta\proj{\ta}{\tb}(x) \proj{\ta}{\tb}(\alpha_2)\\
                       =& \proj{\ta}{\tb}(\beta_1)\ta\proj{\ta}{\tb}(y) \proj{\ta}{\tb}(\beta_2)\\
                       =& \proj{\ta}{\tb}(v')
    \end{align*}
    and $\{\ta, \tb\} \in E(G(w',v'))$.

    At last, consider all $\tb \in \Sigma\setminus \{\ta\}$ with $\{\ta, \tb\} \notin E(G(w,v))$. Hence $\proj{\ta}{\tb}(w) \neq \proj{\ta}{\tb}(v)$. Since $|\alpha_1|_\tc = |\beta_1|_\tc$, $|\alpha_2|_\tc = |\beta_2|_\tc$ and $|y|_\tc=|x|_\tc$ for all $\tc \in \Sigma$, we obtain $\proj{\ta}{\tb}(\alpha_1) \neq \proj{\ta}{\tb}(\beta_1)$, $\proj{\ta}{\tb}(x) \neq \proj{\ta}{\tb}(y)$ or $\proj{\ta}{\tb}(\alpha_2) \neq \proj{\ta}{\tb}(\beta_2)$. \\
Case 1: First consider the case that  $\proj{\ta}{\tb}(\alpha_1) \neq \proj{\ta}{\tb}(\beta_1)$ or $\proj{\ta}{\tb}(\alpha_2) \neq \proj{\ta}{\tb}(\beta_2)$. In this case we get $\proj{\ta}{\tb}(w') \neq \proj{\ta}{\tb}(v')$. 
Hence $\{\ta, \tb\} \notin E(G(w',v'))$. \\
Case 2: Now consider the case $\proj{\ta}{\tb}(\alpha_1) = \proj{\ta}{\tb}(\beta_1)$ and $\proj{\ta}{\tb}(\alpha_2) = \proj{\ta}{\tb}(\beta_2)$. Hence $\proj{\ta}{\tb}(x) \neq \proj{\ta}{\tb}(y)$. We have 
    \begin{align*}
    \proj{\ta}{\tb}(w')=& \proj{\ta}{\tb}(\alpha_1)\ta \proj{\ta}{\tb}(x) \proj{\ta}{\tb}(\alpha_2),\\ 
    \proj{\ta}{\tb}(v')=& \proj{\ta}{\tb}(\beta_1)\ta \proj{\ta}{\tb}(y)\proj{\ta}{\tb}(\beta_2) \\
                       =& \proj{\ta}{\tb}(\alpha_1)\ta \proj{\ta}{\tb}(y)\proj{\ta}{\tb}(\alpha_2).
    \end{align*}
    We have $\proj{\ta}{\tb}(w') \neq \proj{\ta}{\tb}(v')$, because of $\proj{\ta}{\tb}(x) \neq \proj{\ta}{\tb}(y)$. Therefore, $\{\ta, \tb\} \notin E(G(w',v'))$.
    This concludes the proof.\qed
\end{proof}
\fi

\begin{remark}\label{thm:switche_lettersubwords_counterdirectional_in_both_words}
    Similar to \Cref{thm:switche_conscutive_lettersubwords_in_both_words} the following can be proven for $1$-uniform words:
    Let $w,v\in\Sigma^\ast$ with $|w|_\tb=|v|_\tb=1$ for all $\tb\in\al(w)\cup\al(v)$ and $w=\alpha_1x\ta\alpha_2$, $v=\beta_1\ta y\beta_2$ with $\alpha_1,\alpha_2,\beta_1,\beta_2,x,y\in\Sigma^\ast$ and $\ta\in\Sigma$ such that $|y|_\tb=|x|_\tb,~|\alpha_1|_\tb=|\beta_1|_\tb$ and $|\alpha_2|_\tb=|\beta_2|_\tb$ for all $\tb\in\Sigma$.
    Set $w'=\alpha_1x\ta \alpha_2$ and $v'=\beta_1\ta y\beta_2$. Then $G(w,v)=G(w',v')$.
\end{remark}

The full characterisation (\Cref{1uniformpermutation}) of $1$-uniform $2$-word-$\pi$-representability and the fact that all graphs are $2$-word-$\pi$-representable (\Cref{thm:graph_construction}) motivate the following definition which gives a fine-grained classification of all $2$-word-$\pi$-representable graphs. Our goal it to investigate structural properties of this hierarchy.

\begin{definition}
	For each $k \in \mathbb{N}$, the set 
	$$\mathcal{G}_k := \{ G \mid \exists w,v \in V(G)^{\ast}: \text{$w$ and $v$ are $k$-uniform and } G = G(w,v)\}$$ 
	is the set of all graphs that can be represented by two $k$-uniform words. 
\end{definition}

\begin{remark} 
For each $k \in \mathbb{N}$, $\mathcal{G}_k$ is hereditary. 
\end{remark}

\begin{remark} For each $k \in \mathbb{N}$, $\mathcal{G}_k \subseteq \mathcal{G}_{k+1}$.
	For justification consider, $w,v \in \Sigma^*$ both $k$-uniform that represent $G(w,v) \in \mathcal{G}_k$.  Let $u \in \al(w)^*$ be a $1$-uniform word. By \Cref{thm:append_word} we get that $G(w,v) = G(wu,vu)$. Since $wu, vu$ are $(k+1)$-uniform, we obtain that $G(w,v) \in \mathcal{G}_{k+1}$.
\end{remark}

The following remark is an immediate consequence of \Cref{thm:graph_construction}.
\begin{remark} 
$\bigcup_{k = 1}^{\infty} \mathcal{G}_k$ is the class of all graphs. 
\end{remark}

Graphs that are not permutation graphs are not $2$-word-$\pi$-representable by two $1$-uniform words. However those where only one edge is missing to be a permutation graph, are $2$-word-$\pi$-representable by two $2$-uniform words.

\ifpaper
\begin{theorem}\label{thm:1-nearly-permutationgraph}
    For a permutation graph $G'$ and a spanning subgraph $G$ of $G'$ with $|E(G')\setminus E(G)|=1$, $G$ is $2$-word-$\pi$-representable by two $2$-uniform words.
\end{theorem}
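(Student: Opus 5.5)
Let $G'$ be a permutation graph on $V=\alph(w)$ and let $\{\ta,\tb\}$ be the unique edge of $E(G')\setminus E(G)$. By \Cref{1uniformpermutation} (concretely \Cref{cor:2WordPiRepOfPermutationGraph}) there are $1$-uniform words $w,v$ with $G(w,v)=G'$. The plan is to append one more $1$-uniform word to each of $w$ and $v$ so that exactly the edge $\{\ta,\tb\}$ is destroyed. Then the resulting words are $2$-uniform and represent $G$. This is precisely the mechanism of \Cref{thm:remove_edge}, applied once to a pair that is already $1$-uniform.

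Concretely, assume $|V|>2$. Choose a word $x\in(V\setminus\{\ta,\tb\})^\ast$ that contains every letter of $V\setminus\{\ta,\tb\}$ exactly once. Set $u_w=x\ta\tb$ and $u_v=x\tb\ta$, and define $W=wu_w$ and $U=vu_v$. Both $u_w$ and $u_v$ are $1$-uniform over $V$, so $W$ and $U$ are $2$-uniform with $\alph(W)=\alph(U)=V$. If $|V|=2$, take $u_w=\ta\tb$ and $u_v=\tb\ta$ instead.

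It remains to verify $G(W,U)=G$. For any pair $\{\tc,\td\}\neq\{\ta,\tb\}$, at least one letter, say $\tc$, lies outside $\{\ta,\tb\}$. The relative order of $\tc$ and $\td$ is then the same in $u_w$ and $u_v$, so $\proj{\tc}{\td}(u_w)=\proj{\tc}{\td}(u_v)$. Hence
\[\proj{\tc}{\td}(W)=\proj{\tc}{\td}(U)\iff\proj{\tc}{\td}(w)=\proj{\tc}{\td}(v).\]
This holds because $\proj{\tc}{\td}(w)$ and $\proj{\tc}{\td}(v)$ have equal length (both words are $1$-uniform), so a common suffix can be cancelled. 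Therefore all these pairs keep their adjacency from $G'$.

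For the pair $\{\ta,\tb\}$ itself, the projection of $W$ ends in $\ta\tb$ while the projection of $U$ ends in $\tb\ta$. So the projections differ, and $\{\ta,\tb\}$ is not an edge of $G(W,U)$. Hence $E(G(W,U))=E(G')\setminus\{\{\ta,\tb\}\}=E(G)$, and the vertex sets agree.

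The only subtle point is the cancellation step. Equality of the concatenated projections must imply equality of the prefixes, and this needs the prefixes to have equal length; uniformity guarantees that. No real obstacle is expected beyond this. The argument is essentially \Cref{thm:remove_edge} together with the observation that appending a $1$-uniform word raises uniformity by exactly one.
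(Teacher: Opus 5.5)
Your proposal is correct and is essentially the paper's own proof: take $1$-uniform words $w,v$ with $G(w,v)=G'$, then append $u\ta\tb$ to $w$ and $u\tb\ta$ to $v$, where $u$ is a $1$-uniform word over $V\setminus\{\ta,\tb\}$. One small correction: the step you flag as subtle needs no uniformity, because $ps=qs$ already forces $p=q$ in any free monoid, which is exactly what \Cref{thm:append_word} uses.
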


\else

\begin{proof}
    Let $G$ be a graph and $G'$ a permutation graph such that $|E(G')\setminus E(G)|=1$. Then there exist two $1$-uniform words $w,v$ with $G'=G(w,v)$. Therefore $G=G(wu\ta\tb,vu\tb\ta)$ with $\{\ta,\tb\}\in E(G')\setminus E(G)$, $\al(u)=\al(w)\setminus\{\ta,\tb\}$ and $|u|=|\al(w)|-2$.\qed
\end{proof}
\fi

The approach here is to delete only independent edges by swapping the corresponding two letters. To do this, however, it is necessary to determine
which edges are independent, which is an NP-hard problem.

\ifpaper
\begin{theorem}\label{thm:deletion_independent_edges}
	Let $w,v\in\Sigma^\ast$ 
	and $\IE\subseteq E(G(w,v))$ a set of independent edges. 
	There exist 1-uniform words $u_w,u_v\in\Sigma^\ast$ with $\alph(u_w)=\alph(u_v)=\alph(w)$ such that $E(G(wu_w,vu_v))=E(G(w,v))\setminus \IE$.
\end{theorem}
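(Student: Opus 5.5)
By \Cref{thm:append_word}, it suffices to choose two $1$-uniform words $u_w,u_v$ over $\alph(w)$ with the following property: for every pair $\{\tc,\td\}$ we need $\proj{\tc}{\td}(u_w)\neq\proj{\tc}{\td}(u_v)$ exactly when $\{\tc,\td\}\in\IE$. If this holds, the projection on $\{\tc,\td\}$ of $wu_w$ and of $vu_v$ splits as $\proj{\tc}{\td}(w)\proj{\tc}{\td}(u_w)$ and $\proj{\tc}{\td}(v)\proj{\tc}{\td}(u_v)$. The suffixes have equal length $2$, so the projections agree iff both the prefix parts and the suffix parts agree. Hence every edge outside $\IE$ survives, every edge in $\IE$ is destroyed, and non-edges remain non-edges. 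Indeed, if $\proj{\tc}{\td}(w)\neq\proj{\tc}{\td}(v)$, then appending anything of equal projected length cannot make the two sides equal.

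The construction generalises the proof of \Cref{thm:remove_edge}. Write $\IE=\{\{\ta_1,\tb_1\},\dots,\{\ta_m,\tb_m\}\}$; these pairs are pairwise disjoint because the edges are independent. Let $x$ be any $1$-uniform word over $\alph(w)\setminus\bigcup_i\{\ta_i,\tb_i\}$. Set
\[
u_w = x\,\ta_1\tb_1\,\ta_2\tb_2\cdots\ta_m\tb_m,\qquad u_v = x\,\tb_1\ta_1\,\tb_2\ta_2\cdots\tb_m\ta_m .
\]
Disjointness guarantees that every letter of $\alph(w)$ occurs exactly once in each word, so both words are $1$-uniform with alphabet $\alph(w)$.

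It remains to verify the relative orders pair by pair. For $\{\ta_i,\tb_i\}$ the order is reversed, so the projections differ. For any other pair $\{\tc,\td\}$ there are three cases.
\begin{itemize}
\item Both letters lie in $x$: the projections are identical.
\item One letter lies in $x$ and the other in some block: the $x$-letter precedes the other in both words.
\item The letters lie in different blocks $i<j$: the letter from block $i$ precedes the letter from block $j$ in both words, since the blocks appear in the same order.
\end{itemize}
Thus the relative order of $\tc$ and $\td$ is the same in $u_w$ and $u_v$, and the projections agree.

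Equivalently, one could apply \Cref{thm:switche_consecutive_letters} repeatedly to $wu_w$ with $u_w=u_v$. Each swap deletes exactly one edge, and disjointness ensures that later swaps act on still-adjacent letters whose projection is still equal.

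The only genuine point is where independence is used: it makes the $2$-letter blocks disjoint, which is needed both for $1$-uniformity and for the swaps not to interfere. Degenerate cases are trivial: if $\IE=\emptyset$, take $u_w=u_v=x$. One should also note that $\alph(w)=\alph(v)$ is implicitly assumed, as in the definition of $G(w,v)$.
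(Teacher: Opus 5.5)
Your proof is correct and matches the paper's construction. The paper also appends a $1$-uniform word over the letters not covered by $\IE$, followed by the blocks $\ta\tb$ for $\{\ta,\tb\}\in\IE$ in one word and the swapped blocks $\tb\ta$ in the other, and checks the projections pair by pair as you do. Your equal-length splitting argument is what actually shows that non-edges stay non-edges, since \Cref{thm:append_word} only covers appending the same word to both sides.
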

\else

\begin{proof}
	Define $u\in (V(G(w,v))\setminus \bigcup \IE)^*$, such that $|u|_\ta=1$ for all $\ta\in V(G(w,v))\setminus \bigcup \IE$.
	We construct $u_w, u_v$ the following way: Let $\alph(u_w)=\alph(u_v)=\bigcup \IE$, $|u_w|=|u_v|=|\bigcup \IE|$. Define $u_w = \prod_{\{\ta,\tb\} \in \IE} \ta \cdot \tb$. Further, construct $u_v$ for all odd $i \in [|u_w|]$ by $u_v[i+1] = u_w[i]$ and $u_v[i] = u_w[i+1]$. 
	Then we obtain $\pi_{\ta,\tb}(u_w)=\pi_{\ta,\tb}(u_v)^R$ for all $\{\ta,\tb\}\in \IE$ and $\pi_{\ta,\tb}(u_w)=\pi_{\ta,\tb}(u_v)$ for all $\{\ta,\tb\}\not\in \IE$.
	Set $w'=wuu_w$ and $v'=vuu_v$. Thus for all $\ta,\tb\in\alph(w)$ the edge $\{\ta,\tb\}$ is not in $E(G(w',v'))$ iff $\pi_{\ta,\tb}(w')\neq \pi_{\ta,\tb}(v')$. The same holds for the edges of $G(w,v)$. Since for all $\{\ta,\tb\}\in \IE$, we have $u_w=x \pi_{\ta,\tb}(u_w) y=x \pi_{\ta,\tb}(u_v)^R y$ with $x,y\in(\alph(w)\setminus\{\ta,\tb\})^\ast$, $V(G(w',v'))=V(G(w,v))$ and $E(G(w',v'))=E(G(w,v))\setminus \IE$.\qed
\end{proof}
\fi

\Cref{thm:deletion_independent_edges} can now be used instead of \Cref{thm:remove_edge} in the construction of words $w,v$, such that $G(w,v)=G$ for a given graph $G=(V,E)$, similar to \Cref{thm:graph_construction}. 

Furthermore, \Cref{thm:deletion_independent_edges} led us to the problem of 
determining independent edges. This can be achieved with the help of edge colouring. All edges that have been given the same colour do not have a common vertex and can therefore be deleted at the same time.
In this part, we present more insights on independent edges sets and edge colourings. We start with the necessary definition.

\begin{definition}
A {\em valid edge colouring} of a graph $G=(V,E)$ is a function $f:E\rightarrow[k]$ for some $k\in\N$ such that
$f(e_1)\neq f(e_2)$ if $e_1\cap e_1\neq \emptyset$ for all $e_1,e_2\in E$. The minimal possible $k$ such that
$f:E\rightarrow [k]$ is a valid edge-colouring is called the {\em chromatic index} and denoted by $\chi'(G)$. It is folklore by Vizing's theorem that we have $\chi'(G)\in\{\Delta(G),\Delta(G)+1\}$. 
\end{definition}

An algorithm for computing the independent edges for a graph with a given edge colouring is given by:

\begin{algorithm}
	\DontPrintSemicolon
	\SetKwInOut{Input}{input}\SetKwInOut{Output}{output}
	\Input{Edge colouring $f:\overline{E}\rightarrow \mathbb{N}$, chromatic index $\chi(\overline{G})$ and $\overline{E}$}
	\Output{A set of sets of independent edges}
	\tcc{declare $\IES$ as set of $\chi(\overline{G})$ sets}
	$\IES\longleftarrow [\chi(\overline{G})]$\;
	\ForEach{$\{\ta,\tb\}\in \overline{E}$}{
		$\IES[f(\{\ta,\tb\})]$ add $\{\ta,\tb\}$\;
	}
	\KwRet{$\IES$}\;
	\caption{Edge colouring to set of sets of independent edge.}
	\label{algo:colouring_to_set}
\end{algorithm}

\ifpaper
\begin{theorem}\label{thm:graph_construction_independent_edges}
	For every graph $G=(V,E)$ exist $w,v\in\Sigma^\ast$ with $\alph(w)=\alph(v)=V$ and a set of sets of independent edges $\IES$ with $\bigcup \IES=\overline{E}$ and $M\cap N=\emptyset$ for all $M,N\in \IES$, such that $G(w,v)=G$ with $|w|=|v|=|V|\cdot\chi(G)$.
\end{theorem}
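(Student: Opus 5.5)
We read $\chi(G)$ in the statement as the chromatic index $\chi'(\overline{G})$ of the complement, which is the quantity used as input to \Cref{algo:colouring_to_set}. The plan is to rerun the iterative construction of \Cref{thm:graph_construction}. The difference is that each step now deletes a whole colour class of $\overline{E}$ via \Cref{thm:deletion_independent_edges}, instead of a single edge via \Cref{thm:remove_edge}.

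\textbf{Step 1 (partition $\overline{E}$).} Fix a valid edge colouring $f:\overline{E}\to[k]$ of $\overline{G}$ with $k=\chi'(\overline{G})$. Feed it to \Cref{algo:colouring_to_set} to obtain $\IES=\{I_1,\dots,I_k\}$, where $I_j=f^{-1}(j)$. Since $f$ is a function, the $I_j$ are pairwise disjoint and $\bigcup\IES=\overline{E}$. Since $f$ is valid, two edges of the same colour share no vertex, so every $I_j$ is a set of independent edges. This yields the set $\IES$ required by the statement.

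\textbf{Step 2 (iterate the deletion).} Start from a representation of the complete graph on $V$. We take $w_0=v_0=\emptyWord$ and treat the first step as building words over all of $V$ from scratch. In the proof of \Cref{thm:deletion_independent_edges}, the appended words $uu_w$ and $uu_v$ are $1$-uniform over $V$ and agree on every pair except the pairs in the chosen independent set. Hence $G(uu_w,uu_v)$ is exactly $K_{|V|}$ minus $I_1$.

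Inductively, suppose $G(w_{j-1},v_{j-1})$ is the complete graph on $V$ minus $I_1\cup\dots\cup I_{j-1}$. Then $I_j\subseteq E(G(w_{j-1},v_{j-1}))$, because the classes are disjoint. Applying \Cref{thm:deletion_independent_edges} to $(w_{j-1},v_{j-1})$ and $I_j$ gives words $w_j,v_j$ representing the graph with $I_j$ removed as well. After $k$ rounds we have $E(G(w_k,v_k))=\binom{V}{2}\setminus\overline{E}=E$. The alphabet of both words is $V$ throughout, since every appended block is $1$-uniform over $V$.

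\textbf{Step 3 (length count).} Each round appends exactly $|u|+|u_w|=|V|$ letters to each word, so $|w|=|v|=k\cdot|V|=|V|\cdot\chi'(\overline{G})$.

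\textbf{Main obstacle.} The delicate point is not the inductive step but the boundary bookkeeping. First, the base step must start from $\emptyWord$ rather than from a $1$-uniform word for $K_{|V|}$; otherwise we pay an extra $|V|$ letters and get length $|V|(\chi'+1)$. This requires checking that the first appended block alone already yields the full vertex set $V$. Second, the degenerate case $\overline{E}=\emptyset$ (i.e.\ $G$ complete) needs separate treatment. There $k=0$, and we must instead use a single $1$-uniform word for both $w$ and $v$, giving length $|V|$. This case either has to be excluded from the statement or the length read as $|V|\cdot\max(1,\chi'(\overline{G}))$.
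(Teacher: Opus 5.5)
Your proposal is correct and follows the paper's proof: take a minimum edge colouring of $\overline{G}$, turn it into $\IES$ via \Cref{algo:colouring_to_set}, and delete one colour class per round with the independent-edge deletion theorem starting from $w=v=\emptyWord$, adding $|V|$ letters to each word per round. Reading $\chi(G)$ as the chromatic index of $\overline{G}$ matches what the paper's proof actually counts, and your two boundary remarks are genuine points the paper passes over silently: the first block must be checked directly since $\alph(\emptyWord)=\emptyset$, and for complete $G$ the value $\chi'(\overline{G})=0$ would force empty words.
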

\begin{proof}\label{proof:graph_construction_independent_edges}
	The set of sets with the independent edges $\IES$ is received by applying \Cref{algo:colouring_to_set} to a minimal edge colouring of the graph $(V,\overline{E})$. 
	The words $w,v$ are constructed iteratively by removing one set of independent edges $M\in \IES$ after another with \Cref{thm:deletion_independent_edges}, starting with $w=v=\emptyWord$. In each iteration one set of independent edges is removed. After removing all edges $\{\ta,\tb\}\in M$ in every set $M\in \IES$, only the edges in $E$ are left. This takes $|\IES|=\chi(\overline{G})$ iterations and since in each iteration the length of words $w,v$ is increased by $|V|$, $|w|=|v|=|V|\cdot\chi(G)$.\qed
\end{proof}
\else

\fi

\ifpaper
\begin{corollary}
	The complexity of \Cref{algo:colouring_to_set} is in $\mathcal{O}\left(|\bigcup \IES|\right)$ since the for-loop is executed for each edge to be deleted.
\end{corollary}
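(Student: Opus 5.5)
The statement is that \Cref{algo:colouring_to_set} runs in time $\mathcal{O}(|\bigcup \IES|)$. The plan is simply to count elementary operations line by line, under the usual RAM model: the colouring $f$ is given as an array or dictionary with $\mathcal{O}(1)$ lookup, and each set $\IES[i]$ is a list or bucket with $\mathcal{O}(1)$ insertion.

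First, the output $\IES$ produced by the loop satisfies $\bigcup \IES = \overline{E}$. Every edge of $\overline{E}$ is inserted into exactly one bucket, namely $\IES[f(\{\ta,\tb\})]$, and nothing else is ever inserted. The buckets are therefore pairwise disjoint, and $|\bigcup \IES| = |\overline{E}|$. So it suffices to bound the running time by $\mathcal{O}(|\overline{E}|)$.

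Second, the for-loop performs exactly $|\overline{E}|$ iterations. Each iteration consists of one evaluation of $f$ and one insertion into a bucket, both $\mathcal{O}(1)$. The loop therefore costs $\mathcal{O}(|\overline{E}|) = \mathcal{O}(|\bigcup \IES|)$.

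The main obstacle is the initialisation line, which allocates $\chi(\overline{G})$ empty sets in time $\mathcal{O}(\chi(\overline{G}))$; this must be absorbed into the bound.
\begin{itemize}
\item If $\overline{E}=\emptyset$, then $\chi(\overline{G})=0$ and the algorithm does nothing beyond constant work. This constant is, as usual, ignored, or counted as $\mathcal{O}(1+|\bigcup\IES|)$.
\item Otherwise a minimal edge colouring uses every colour at least once. If some colour were unused, the colours could be renumbered to obtain a valid colouring with fewer colours, contradicting minimality. Hence $\chi(\overline{G}) \le |\overline{E}|$, and the initialisation also costs $\mathcal{O}(|\overline{E}|)$.
\end{itemize}

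Summing the initialisation, the loop and the constant-time return gives $\mathcal{O}(|\bigcup \IES|)$, as claimed.
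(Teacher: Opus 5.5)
Your proposal is correct and follows the paper's reasoning: the paper justifies the bound only by noting that the for-loop runs once per edge of $\overline{E}$ with constant work per iteration. Your extra accounting for initialising the $\chi(\overline{G})$ buckets is a valid refinement that the paper omits, and $\chi(\overline{G}) \le |\overline{E}|$ already follows from giving every edge its own colour, so you do not need the minimality argument.
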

\else

\fi

\begin{example}\label{example:graph_construction_independent_edges}
	Given $G=(V,E)$ with $V=\{a,b,c,d\}$, $E=\{\{a,b\},\{a,d\},\{c,d\}\}$. For the set of sets of independent edges $\IES=\{\{\{\ta,\tc\},\{\tb,\td\}\},\{\{\tb,\tc\}\}\}$ the words $w,v$ can be constructed in two instead of three iterations.
	\begin{table}[h]
        \centering
		\begin{tabular}{c|c|c|c}
			$w$										& $v$										& delete						& $E$\\
			\hline
			$\emptyWord$							& $\emptyWord$								& -								& $\emptyset$\\	
			$\ta\tc\tb\td$							& $\tc\ta\td\tc$							& $\{\{\ta,\tc\},\{\tb,\td\}\}$	& $\{\{\ta,\tb\},\{\ta,\td\},\{\tb,\tc\},\{\tc,\td\}\}$\\
			$\ta\tc\tb\td\ta\td\tb\tc$				& $\tc\ta\td\tc\ta\td\tc\tb$				& $\{\{\tb,\tc\}\}$				& $\{\{\ta,\tb\},\{\ta,\td\},\{\tc,\td\}\}$
		\end{tabular}
		\caption{Construction of $w,v$ with \Cref{thm:graph_construction_independent_edges}}
		\label{table:example_graph_construction_set_independent_edges}
	\end{table}
	\begin{figure}
		\centering
		\begin{minipage}{.33\textwidth}
			\centering
			\begin{tikzpicture}
				\node (a) {$\ta$};
				\node[right of=a] (b) {$\tb$};
				\node[below of=a] (c) {$\tc$};
				\node[below of=b] (d) {$\td$};
			\end{tikzpicture}

			Initial Graph
		\end{minipage}%
		\begin{minipage}{.33\textwidth}
			\centering
			\begin{tikzpicture}
				\node (a) {$\ta$};
				\node[right of=a] (b) {$\tb$};
				\node[below of=a] (c) {$\tc$};
				\node[below of=b] (d) {$\td$};
				\draw (a) -- (b);
				\draw (a) -- (d);
				\draw (b) -- (c);
				\draw (c) -- (d);
			\end{tikzpicture}

			First iteration
		\end{minipage}%
		\begin{minipage}{.33\textwidth}
			\centering
			\begin{tikzpicture}
				\node (a) {$\ta$};
				\node[right of=a] (b) {$\tb$};
				\node[below of=a] (c) {$\tc$};
				\node[below of=b] (d) {$\td$};
				\draw (a) -- (b);
				\draw (a) -- (d);
				\draw (c) -- (d);
			\end{tikzpicture}

			Second iteration
		\end{minipage}
		\caption{Graphs for construction as in \Cref{table:example_graph_construction_set_independent_edges}}
	\end{figure}
\end{example}

\begin{algorithm}
	\DontPrintSemicolon
	\SetKwInOut{Input}{input}\SetKwInOut{Output}{output}
	\Input{A set of sets $\IES$ containing edges $\{\ta,\tb\}$}
	\Output{True if edges in sets are independent, otherwise false}
	$A\longleftarrow \emptyset$\;
	\ForEach{$M\in \IES$}{
		\ForEach{$\{\ta,\tb\}\in M$}{
			\If{$\{\ta,\tb\}\in A$}{
				\KwRet{False}\;
			}
			$A\longleftarrow A\cup\{\{\ta,\tb\}\}$\;
		}
	}
	\ForEach{$M\in \IES$}{
		\ForEach{$\{\ta,\tb\}\in M$}{
			$M\longleftarrow M\setminus\{\{\ta,\tb\}\}$\;
			\ForEach{$\{\tc,\td\}\in M$}{
				\If{$\{\ta,\tb\}\cap\{\tc,\td\}\neq\emptyset$}{
					\KwRet{False}\;
				}
			}
		}
	}
	\KwRet{True}\;
	\caption{Verifier for set of sets of independent edges}
	\label{algo:npc_verifier}
\end{algorithm}

\ifpaper
\begin{theorem}\label{thm:independent_edges_NPC}
	Given a graph $G=(V,E)$, the determination of a set of sets of independent edges $\IES$ with $\bigcup \IES=\overline{E}$, $\{\ta,\tb\}\in\bigcup \IES$ for all $\{\ta,\tb\}\in\overline{E}$ and $|\IES|=\chi(\overline{G})$ is in NPC.
\end{theorem}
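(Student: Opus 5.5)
We prove the two halves of NP-completeness separately: membership in NP via the verifier of \Cref{algo:npc_verifier}, and NP-hardness by reduction from computing the chromatic index. We read the statement as the decision version: given $G$ and $k$, decide whether a partition $\IES$ of $\overline{E}$ into independent edge sets with $|\IES|\le k$ exists. Equivalently, decide whether $\chi'(\overline{G})\le k$. Minimality $|\IES|=\chi'(\overline{G})$ then corresponds to the optimisation version.

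\textbf{Membership in NP.} The certificate is the family $\IES$ itself. Its size is $O(|\overline{E}|)\le O(|V|^2)$, since every edge of $\overline{E}$ occurs in exactly one set. \Cref{algo:npc_verifier} checks the partition and independence conditions.
\begin{itemize}
\item Its first double loop rejects any edge occurring twice, so the sets are pairwise disjoint. This takes linear time given a hash set or a $|V|\times|V|$ table.
\item Its second loop compares pairs of edges inside each $M$, which takes $O(\sum_M|M|^2)\subseteq O(|V|^4)$ time.
\end{itemize}
We add two checks that the algorithm omits: that $\bigcup\IES=\overline{E}$ (every listed edge is a non-edge of $G$, and the counts agree), and that $|\IES|\le k$. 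Both are polynomial. Hence the problem is in NP.

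\textbf{NP-hardness.} We reduce from the problem of deciding whether a graph $H$ has chromatic index at most $k$. Holyer showed this is NP-complete already for cubic graphs and $k=3$. Given $H=(V,F)$, output $G:=\overline{H}=(V,\binom{V}{2}\setminus F)$ and the same $k$. This takes $O(|V|^2)$ time, and $\overline{G}=H$.

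A valid edge colouring $f:F\to[k]$ of $H$ gives, via \Cref{algo:colouring_to_set}, a family $\IES$ of at most $k$ pairwise disjoint independent sets covering $\overline{E(G)}=F$. Conversely, such a family defines a colouring by assigning each edge the index of its set. So $H$ is a yes-instance if and only if $G$ is, and NP-hardness follows.

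\textbf{Main obstacle.} The only real subtlety is formalising the statement as a decision problem, since "determine a family with $|\IES|=\chi'(\overline{G})$" is a search/optimisation task. We will state explicitly that we prove NP-completeness of the threshold version. The search problem is then NP-hard: an oracle for it yields $\chi'(\overline{G})$ as $|\IES|$, which decides Holyer's problem. The remaining care point is citing Holyer correctly and noting that complementation preserves polynomial input size.
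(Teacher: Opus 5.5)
Your proposal is correct and follows the same route as the paper: membership in NP via the verifier of \Cref{algo:npc_verifier}, and hardness from Holyer's result via the correspondence between edge colourings of $\overline{G}$ and families $\IES$ given by \Cref{algo:colouring_to_set} and its inverse. It is more careful than the paper's argument: you fix a threshold decision version (exact optimality $|\IES|=\chi'(\overline{G})$ could not be certified by such a verifier anyway), you add the coverage and cardinality checks that \Cref{algo:npc_verifier} omits, and you make the instance map $H\mapsto\overline{H}$ explicit.
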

\begin{proof}
	Use of \Cref{algo:npc_verifier} as a verifier. Since a given instance $\IES$ has at most $|\bigcup \IES|=|\overline{E}|=\frac{|V|(|V|-1)}{2}$ edges in case of a full graph, the complexity of the first nested for loop is in $\mathcal{O}\left(\frac{|V|(|V|-1)}{2}\right)$ and so in $\mathcal{O}(|V|^2)$. Furthermore the complexity of the second nested loop is in $\mathcal{O}\left(\frac{\frac{|V|(|V|-1)}{2}(\frac{|V|(|V|-1)}{2}-1)}{2}\right)$ and so in $\mathcal{O}(|V|^4)$. Thus the complexity of the verifier is polynomial in the number of vertices of the given graph $G$. Therefore, the determination of a set of sets of independent edges is in NP.\\
	Let $f\in$ minimal edge colouring of $\overline{G}$ for a given $G=(V,E)$. Then the $\IES$ obtained by \Cref{algo:colouring_to_set} applying to $f$, the chromatic index $\chi(\overline{G})$ and $\overline{E}$ has $|\IES|=\chi(\overline{G})$ and for all sets $M\in \IES$ the elements of $M$ are mapped to the same number/colour and therefore, are independent to each other. Furthermore $\bigcup \IES=\overline{E}$ since $f$ maps all elements $\{\ta,\tb\}\in\overline{E}$ to $\{1,2,\dots,\chi(\overline{G})\}$, thus for all edges $\{\ta,\tb\}\in\overline{E}$ exists $M\in \IES$, such that $\{\ta,\tb\}\in M$. This means that a minimal edge colouring instance can be transformed into a set of sets of independent edges, of which the big union is equal to $\overline{E}$, in polynomial time.\\
	Let $\IES$ a set of sets of independent edges with $\bigcup \IES=\overline{E}$ and $|\IES|=\chi(\overline{G})$. Assign indices to the sets of $\IES$ so that $\IES=\{M_1,M_2,\dots,M_{\chi(\overline{G})}\}$ and construct a function $f:\bigcup \IES\rightarrow \{1,2,\dots, |\IES|\}$ which maps all $\{\ta,\tb\}\in\bigcup \IES$ to $i$ with $\{\ta,\tb\}\in M_i$ and $i\in\{1,2,\dots,\chi(\overline{G})\}$. Then $f$ is a minimum edge colouring for $\overline{G}$ and it is shown that a set of sets of independent edges which big union is equal to $\overline{E}$ can be transformed to a instance of minimal edge colouring of the graph $(V,\overline{E})$.\\
	Since minimal edge colouring reduces to set of sets of independent edges and it has been shown that constructing a set of sets of independent edges of given edges is in NP and edge colouring is in NPC \cite{Hol81_2}, we conclude that this problem is NP-complete.\qed
\end{proof}
\else

\fi

As a next step we show that cycle graphs of length at least five can be $2$-word-$\pi$-represented by two $2$-uniform words. To this end we combine the ideas of \Cref{thm:1-nearly-permutationgraph,thm:deletion_independent_edges}.

\Cref{fig:permutationgraph_representation} shows an example for a permutation graph and its intersection model. As a next step, we show that the graphs that are representable by two $1$-uniform words are exactly the permutation graphs.

\begin{figure}
\begin{minipage}{0.5\textwidth}
		\centering
		\begin{tikzpicture}[
			dot/.style={
				circle,
				fill=black,
				inner sep=1pt,
				minimum size=0.05cm
			}]
			
			\draw [color=gray] (-0.2,0) -- (2.6,0);
			\draw [color=gray] (-0.2,1) -- (2.6,1);

			\node [dot, color=\colorOne, label={below:2}] (2) at (0,0) {};
			\node [dot, color=\colorOne, label={below:1}] (1) at (1,0) {};
			\node [dot, color=\colorOne, label={above:1}] (1b) at (0,1) {};
			\node [dot, color=\colorOne, label={above:2}] (2b) at (1,1) {};
			\draw [color=\colorOne](1) -- (1b);
			\draw [color=\colorOne](2) -- (2b);

			\node [dot, color=\colorFive, label={below:3}] (3) at (0.7,0) {};
			\node [dot, color=\colorFive, label={below:4}] (4) at (1.7,0) {};
			\node [dot, color=\colorFive, label={above:4}] (4b) at (0.7,1) {};
			\node [dot, color=\colorFive, label={above:3}] (3b) at (1.7,1) {};
			\draw [color=\colorFive](3) -- (3b);
			\draw [color=\colorFive](4) -- (4b);

			\node [dot, color=\colorFour, label={below:6}] (6) at (1.4,0) {};
			\node [dot, color=\colorFour, label={below:5}] (5) at (2.4,0) {};
			\node [dot, color=\colorFour, label={above:5}] (5b) at (1.4,1) {};
			\node [dot, color=\colorFour, label={above:6}] (6b) at (2.4,1) {};
			\draw [color=\colorFour](5) -- (5b);
			\draw [color=\colorFour](6) -- (6b);

			\node [dot, color=\colorThree, label={below:7}] (7) at (2.1,0) {};
			\node [dot, color=\colorThree, label={above:7}] (7b) at (2.1,1) {};
			\draw [color=\colorThree](7) -- (7b);
		\end{tikzpicture}
	\end{minipage}%
	\begin{minipage}{0.5\textwidth}
		\centering
		\begin{tikzpicture}[
			dot/.style={
				circle,
				fill=black,
				inner sep=1pt,
				minimum size=0.05cm
			}]
			
			\node (1) at (0,0) {\color{\colorOne}1};
			\node (2) at (1,0) {\color{\colorOne}2};
			\draw [color=\colorOne](1) -- (2);

			\node (3) at (0,1) {\color{\colorFive}3};
			\node (4) at (1,1) {\color{\colorFive}4};
			\draw (1) -- (3);
			\draw (2) -- (4);
			\draw [color=\colorFive](3) -- (4);

			\node (5) at (0,2) {\color{\colorFour}5};
			\node (6) at (1,2) {\color{\colorFour}6};
			\draw (3) -- (5);
			\draw (4) -- (6);
			\draw (5)[color=\colorFour] -- (6);
			
			\node (7) at (0.5,2.5) {\color{\colorThree}7};
			\draw (5) -- (7);
			\draw (6) -- (7);
		\end{tikzpicture}
	\end{minipage}
\caption{The intersection model (on the left side) shows that the graph (on the right side) constructed from a ladder graph by adding an additional vertex connected to the vertices of the upper step is a permutation graph.}
\label{fig:2uniformcycles}
\end{figure}

\ifpaper
\begin{theorem}\label{thm:2uniformcycles}
    The set of cycle graphs with $|V|>4$ is contained in $\mathcal{G}_2$.
\end{theorem}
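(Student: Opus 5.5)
We will show that every cycle $C_n$ with $n\geq 5$ is a spanning subgraph of a permutation graph $G'_n$, and that $G'_n$ contains only independent extra edges, i.e.\ $E(G'_n)\setminus E(C_n)$ is a matching. Then \Cref{thm:deletion_independent_edges} applies. Since $G'_n$ is a permutation graph, \Cref{1uniformpermutation} gives $1$-uniform words $w,v$ with $G(w,v)=G'_n$. \Cref{thm:deletion_independent_edges} then yields $1$-uniform words $u_w,u_v$ over $V$ with $E(G(wu_w,vu_v))=E(G'_n)\setminus\IE=E(C_n)$, where $\IE$ is the matching of extra edges. The words $wu_w$ and $vu_v$ are $2$-uniform, so $C_n\in\mathcal G_2$. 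This is exactly the combination of \Cref{thm:1-nearly-permutationgraph,thm:deletion_independent_edges} announced before the statement.

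The choice of $G'_n$ is suggested by \Cref{fig:2uniformcycles}. Take a ladder: two paths $1,3,5,\dots$ and $2,4,6,\dots$, whose consecutive vertices are adjacent, joined by rungs $\{1,2\},\{3,4\},\dots$. For odd $n$, add a top vertex adjacent to both ends of the last rung. For even $n$, simply close with the last rung.

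The cycle $C_n$ uses the two rails, the bottom rung $\{1,2\}$, and at the top either the last rung (even $n$) or the two edges to the apex (odd $n$). The remaining rungs are the extra edges; in the odd case the top rung $\{5,6\}$-type edge is also extra. All these rungs are pairwise disjoint, so they form a set of independent edges.

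It remains to show that this ladder (with or without the apex) is a permutation graph for every length. Here I would generalise the intersection model of \Cref{fig:2uniformcycles}. On the top line, the pairs appear in the order $1,2\,|\,4,3\,|\,5,6\,|\dots$, and on the bottom line in the order $2,1\,|\,3,4\,|\,6,5\,|\dots$. The pairs are shifted so that consecutive pairs overlap in a staggered way: segment $2k-1$ crosses only $2k$ and $2k+1$, and so on. The apex segment $7$ is placed to cross exactly the last two segments.

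Concretely, I would write down the two $1$-uniform words directly, inductively extending them by one rung at a time, and verify via \Cref{cor:2WordPiRepOfPermutationGraph} that two segments cross exactly when their letters are in opposite order in the two words. The main obstacle is this verification. Each new pair must reverse order with precisely its two predecessors on the respective rails, and with nothing earlier. I would handle it by an induction showing that appending a new pair only interacts with the last pair in the words.
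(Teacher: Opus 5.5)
Your proposal is correct and is essentially the paper's own proof. You realise $C_n$ as a spanning subgraph of the ladder, with an apex adjacent to the last rung when $n$ is odd; this is a permutation graph via the staggered intersection model of \Cref{fig:2uniformcycles}, and you then delete the pairwise disjoint interior rungs in one step by appending $1$-uniform words, which lands in $\mathcal{G}_2$. Your handling of even $n$ (keeping the last rung) is in fact more careful than the paper's: there the set $I_n$ for even $n$ also contains $\{n-1,n\}$, so deleting it would leave a Hamiltonian path rather than a cycle.
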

\begin{proof}\label{thm:2uniformcycles}
Let $n > 4$ and $L_n := ([n],E_n)$ with $E_n := \{ \{ 2i-1,  2i\} \mid i \in [\lfloor \frac{n}{2}\rfloor]\} \cup \{ \{2i-1,2i+1\}, \{2i,2i+2\} \mid i \in [\lfloor \frac{n}{2}\rfloor -1]\} \cup \{ \{n-2, n\}, \{n-1, n\}\}$. $L_n$ is a permutation graph (confer \Cref{fig:2uniformcycles}) and therefore in $\mathcal{G}_1$ by \Cref{1uniformpermutation}. $I_n := \{ \{ 2i-1,  2i\} \mid i \in [\lfloor \frac{n}{2}\rfloor] \setminus \{1\} \}$ is an independent edge set in $L_n$. Note that the graphs $([n], E_n \setminus I_n)$ are exactly the cycle graphs of length at least five. By \Cref{thm:deletion_independent_edges}, the edges in $I_n$ can be deleted from $L_n$ by appending $1$-uniform words. This proves the statement.\qed
\end{proof}

\else

\fi

Note that we also get the following by combining \Cref{cor:C>5forbiddenSubgraph}  and \Cref{thm:2uniformcycles}.
\begin{corollary}
$\mathcal{G}_1 \subsetneq \mathcal{G}_2$. 
\end{corollary}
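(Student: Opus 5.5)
The plan is to prove the two inclusions separately, taking $\mathcal{G}_1 \subseteq \mathcal{G}_2$ as already settled and then exhibiting one graph in $\mathcal{G}_2 \setminus \mathcal{G}_1$.

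The inclusion $\mathcal{G}_1 \subseteq \mathcal{G}_2$ is the earlier remark stating $\mathcal{G}_k \subseteq \mathcal{G}_{k+1}$. Concretely, for $1$-uniform $w,v$ with $G = G(w,v)$, \Cref{thm:append_word} gives $G = G(wu,vu)$ for any $1$-uniform $u$ over $\alph(w)$. The words $wu$ and $vu$ are $2$-uniform, so $G \in \mathcal{G}_2$.

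For strictness I take the cycle $C_5$. By \Cref{thm:2uniformcycles}, every cycle with more than four vertices lies in $\mathcal{G}_2$, so $C_5 \in \mathcal{G}_2$. By \Cref{cor:C>5forbiddenSubgraph}, $C_5$ is not a permutation graph, since it is a forbidden induced subgraph of itself. \Cref{1uniformpermutation} identifies $\mathcal{G}_1$ with the class of permutation graphs, hence $C_5 \notin \mathcal{G}_1$.

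I do not expect any step to be a real obstacle, since everything reduces to citing earlier results. The one point needing care is that both \Cref{1uniformpermutation} and \Cref{cor:C>5forbiddenSubgraph} are up to isomorphism. \Cref{1uniformpermutation} says a graph has a $1$-uniform representation exactly when it is isomorphic to some $G(\sigma)$. Relabelling the letters of the representing words by a graph isomorphism preserves projection equality, so membership in $\mathcal{G}_1$ is invariant under isomorphism. With that observation the argument is complete.
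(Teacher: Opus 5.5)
Your proposal is correct and is essentially the paper's own argument. The inclusion is the remark $\mathcal{G}_k \subseteq \mathcal{G}_{k+1}$, and strictness comes from a cycle of length at least five: it lies in $\mathcal{G}_2$ by the cycle theorem, but it is not a permutation graph, hence not in $\mathcal{G}_1$ by the characterisation of $\mathcal{G}_1$ as the permutation graphs. Fixing the witness as $C_5$ is also a good choice, since the paper's ladder construction for cycles, as written, yields a cycle only for odd $n$ (for even $n$ it also deletes the last rung and leaves a path), whereas for $n=5$ it visibly works: deleting the rung $\{3,4\}$ from the house graph $L_5$ leaves the cycle $1,2,4,5,3$.
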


We want to characterize the graph class $\mathcal{G}_2$. To this end, we study subclasses of $\mathcal{G}_2$. In \cite{DBLP:journals/combinatorics/JonesKPR15} Jones et al. study the representation of graphs via pattern avoiding words, i.e.
$u$-representable graphs for pattern $u \in \{1,2\}^* \setminus \{2\}^*$ of length at least $2$. 
The reduction $red(w)$ of a word $w \in \mathbb{N}^*$ is the word that we obtain by replacing every appearance of the \nth{$i$} smallest letter in $w$ by $i$. For example $red(524) = 312$. A word has a $12$-match iff it has a factor $f$ such that $red(f) = 12$. For example $5364$ has a $12$-match at position $2$ because $red(36) = 12$. A graph $G = (V, E)$ with $V \subseteq \mathbb{N}$ is $12$-representable iff there exits a word $w$ such that $\letters(w)=V$ and for each $\{i,j\} \in \binom{V}{2}$, $\{i,j\} \in E \Leftrightarrow \pi_{i,j}(w) $ has no $12$-match. Note that the classical word-representable graphs are exactly the $11$-representable graphs.

\ifpaper
\begin{theorem}
	The $12$-representable graphs are a subset of $\mathcal{G}_2$.
\end{theorem}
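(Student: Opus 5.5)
The plan is to rewrite the $12$-representation as a $2$-uniform word $u$ and pair it with one fixed $2$-uniform word $v$. Then $\proj{i}{j}(u)=\proj{i}{j}(v)$ should hold exactly when $\proj{i}{j}$ of the original representant has no $12$-match.

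\emph{Step 1: what "no $12$-match" means.} Let $G=(V,E)$ with $V\subseteq\N$ be $12$-represented by $w$, and let $i<j$ be in $V$. The projection $\proj{i}{j}(w)$ has a $12$-match iff some factor of it reads $ij$. So it has no $12$-match iff it is non-increasing, i.e.\ $\proj{i}{j}(w)\in j^{\ast}i^{\ast}$. Since both letters occur, this means: $\{i,j\}\in E$ iff the last occurrence of $j$ in $w$ precedes the first occurrence of $i$.

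\emph{Step 2: normalise to a $2$-uniform word.} Build $u$ from $w$ as follows. For each letter $x$, keep only its first and its last occurrence and delete all others. If $x$ occurs only once, replace that single occurrence by $xx$. The resulting $u$ is $2$-uniform with $\alph(u)=V$.

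The relative order of the first occurrences and of the last occurrences of any two letters is unchanged. Hence for $i<j$, the statement "the last $j$ precedes the first $i$" holds in $u$ iff it holds in $w$. By Step 1 this gives
\[
\proj{i}{j}(u)=jjii \iff \{i,j\}\in E .
\]
Indeed, in $u$ this condition means exactly that $\proj{i}{j}(u)=jjii$, because each letter occurs twice.

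\emph{Step 3: the second word.} List $V=\{x_1>x_2>\dots>x_n\}$ in decreasing order and set $v=x_1x_1x_2x_2\cdots x_nx_n$. This word is $2$-uniform with $\alph(v)=V$, and for every $i<j$ it satisfies $\proj{i}{j}(v)=jjii$. By Step 2, $\proj{i}{j}(u)=\proj{i}{j}(v)$ iff $\{i,j\}\in E$, so $G=G(u,v)\in\mathcal{G}_2$.

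The only delicate point is Step 2. One must check that deleting the middle occurrences and doubling the singletons preserves the property "last $j$ before first $i$". This holds because both operations fix the positions of the first and last occurrence of each letter relative to those of every other letter. The rest is immediate.
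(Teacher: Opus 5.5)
Your proof is correct and follows the paper's route: you pair a $2$-uniform word $u$ that $12$-represents $G$ with the word listing the vertices in decreasing order, each letter doubled, so that for $i<j$ the projections agree iff $\proj{i}{j}(u)=jjii$, i.e.\ iff $\proj{i}{j}(u)$ has no $12$-match. The only difference is that the paper cites Jones et al.\ for the existence of a $2$-uniform $12$-representant, whereas you prove it directly by keeping only the first and last occurrence of each letter and doubling singletons, which makes your argument self-contained.
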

\else

\begin{proof}
	Let $G$ be $12$-representable, i.e. $V(G) \subseteq \mathbb{N}$. W.l.o.g. assume $V(G) = [|V|]$. According to \cite[p. 7]{DBLP:journals/combinatorics/JonesKPR15}, 
	there exists a $2$-uniform word $v$ that $12$-represents $G$. For each $\{i,j\} \in \binom{V(G)}{2}$ such that $i < j$, $\pi_{i,j}(v)$ has no $12$-match iff $ij$ is not a factor of $v$ iff $\pi_{i,j}(w) = jjii$. Let $w := |V|^2 (|V|-1)^2 \dots 1^2$. We will show that $G(w,v) = G$. For each $\{i,j\} \in \binom{V(G)}{2}$ such that $i < j$, 
	\begin{align*}
		\{ i,j\} \in E(G(w,v)) 
		& \Leftrightarrow \pi_{i,j}(w) = \pi_{i,j}(v) \\
		& \Leftrightarrow \pi_{i,j}(w) = jjii \\
		& \Leftrightarrow \pi_{i,j}(w) \text{ has no $12$-match} \\
		& \Leftrightarrow \{ i,j\} \in E(G).
	\end{align*}
	Therefore, $G(w,v) = G$ and $G \in \mathcal{G}_2$.\qed
\end{proof}
\fi

According to \cite[Fig. 1]{DBLP:journals/combinatorics/JonesKPR15}, the co-interval graphs and the permutation graphs are a subset of the $12$-representable graphs. 

\begin{corollary}
The co-interval graphs are a subset of $\mathcal{G}_2$.
\end{corollary}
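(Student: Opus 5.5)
The corollary follows by chaining the preceding theorem with the cited inclusion from \cite[Fig.~1]{DBLP:journals/combinatorics/JonesKPR15}. That figure places the co-interval graphs inside the class of $12$-representable graphs. The preceding theorem shows that every $12$-representable graph lies in $\mathcal{G}_2$. Transitivity of inclusion then gives the claim.

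Two small points need checking. First, the definition of $12$-representability requires $V(G) \subseteq \mathbb{N}$, while $\mathcal{G}_2$ is defined up to the alphabet $V(G)$. Given an arbitrary co-interval graph, I would relabel its vertices bijectively by $[|V|]$. By the definition of graph isomorphism and the fact that $G(w,v)$ only depends on the projections, renaming letters in both representing words gives an isomorphic graph. Hence membership in $\mathcal{G}_2$ is invariant under relabelling, and I can apply the theorem to the relabelled graph and transfer the two $2$-uniform words back.

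Second, the theorem's proof relies on the fact from \cite{DBLP:journals/combinatorics/JonesKPR15} that every $12$-representable graph is $12$-represented by a $2$-uniform word $v$. One then pairs it with $w = |V|^2(|V|-1)^2\cdots 1^2$. Since the corollary only invokes the theorem as stated, nothing more needs verification.

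There is no real obstacle. The only point requiring care is that the inclusion of co-interval graphs in the $12$-representable graphs is taken from the literature as a black box. I would cite it explicitly and not reprove it. If a self-contained argument were wanted, I would instead build a $2$-uniform word directly from an interval model of the complement: write each interval's left endpoint and right endpoint in increasing order of position. I would then check that $ij$ with $i<j$ appears as a factor exactly when the intervals of $i$ and $j$ are disjoint. I would relegate this to a remark rather than the main proof.
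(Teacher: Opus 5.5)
Your argument is correct and is exactly the paper's: the corollary is just the cited inclusion of co-interval graphs in the $12$-representable graphs \cite[Fig.~1]{DBLP:journals/combinatorics/JonesKPR15} chained with the preceding theorem. Two small corrections. First, $12$-representability depends on the labelling, so you must relabel by the labelling the cited result provides, not by an arbitrary bijection onto $[|V|]$ (e.g.\ the path $P_3$ with its middle vertex labelled $2$ is not $12$-representable, since the edges $\{1,2\},\{2,3\}$ force $\{1,3\}$). Second, in your optional direct construction you must label so that, of two disjoint intervals, the left one gets the larger label; then the factor $ij$ ($i<j$) occurs in $\pi_{i,j}(w)$ exactly when the intervals \emph{intersect}, not when they are disjoint as you wrote.
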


We will use a result from \cite{FenFFKS2026} to show that the hierarchy of the graph classes $\mathcal{G}_k$ does not collapse. 

\ifpaper
\begin{theorem}\label{thm:Gknotall}
	For all $k \in \mathbb{N}$, $\mathcal{G}_k$ is not the class of all graphs. 
\end{theorem}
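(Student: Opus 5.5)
A counting argument over labelled graphs on $[n]$ should suffice. I would first bound $|(\mathcal{G}_k)_n|$, the number of labelled graphs on vertex set $[n]$ lying in $\mathcal{G}_k$. Every such graph is of the form $G(w,v)$ with $w,v$ both $k$-uniform over $[n]$, so it is determined by the pair $(w,v)$. The number of $k$-uniform words over $[n]$ is the multinomial coefficient $\frac{(kn)!}{(k!)^n}\le (kn)^{kn}$. Hence
\[
|(\mathcal{G}_k)_n| \le \left((kn)^{kn}\right)^2 = 2^{2kn\log_2(kn)} .
\]
For fixed $k$ this is $2^{O(n\log n)}$, so the speed of $\mathcal{G}_k$ is at most factorial.

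On the other side, the number of all labelled graphs on $[n]$ is $2^{\binom{n}{2}} = 2^{n(n-1)/2}$. Since $2kn\log_2(kn) < n(n-1)/2$ for all sufficiently large $n$ (with $k$ fixed), some graph on $[n]$ is not in $\mathcal{G}_k$. This proves the theorem. The cited result from \cite{FenFFKS2026} presumably supplies exactly this kind of speed bound for classes represented by uniform words, and one could invoke it instead of redoing the count. The elementary counting above is self-contained, however, so I would present it directly and mention the citation as the source of the idea.

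The only delicate step is making sure the count is over labelled graphs on a fixed vertex set. The definition of $\mathcal{G}_k$ is up to the alphabet $V(G)$, and a graph with vertex set $[n]$ must be represented by words over $[n]$ itself. The map $(w,v)\mapsto G(w,v)$ from pairs of $k$-uniform words over $[n]$ onto $(\mathcal{G}_k)_n$ is therefore surjective, which justifies the upper bound.

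A concrete threshold is also easy to give. Taking $n > 4k\log_2(kn)+1$, which holds for all large $n$, makes the inequality explicit. Hence for every $k$ there is an $n$ and a graph on $n$ vertices not representable by two $k$-uniform words.
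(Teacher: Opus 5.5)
Your counting argument is correct. For $V(G)=[n]$, membership in $\mathcal{G}_k$ means exactly that $G=G(w,v)$ for some $k$-uniform $w,v\in[n]^*$. Hence $|(\mathcal{G}_k)_n|\le\bigl(\frac{(kn)!}{(k!)^n}\bigr)^2\le 2^{2kn\log_2(kn)}$, which is less than $2^{\binom{n}{2}}$ for large $n$.

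The paper takes a different, indirect route. It maps a pair $(w,v)$ to the single word $wv$. It then observes that $\pi_{\ta,\tb}(w)=\pi_{\ta,\tb}(v)$ holds iff $h_{\ta,\tb}(wv)$ lies in the finite, complement-closed language $L_k=\{u^2\mid u\in 0^k\shuffle 1^k\}$, using that both projections lie in $0^k\shuffle 1^k$ by $k$-uniformity. This gives $\mathcal{G}_k\subseteq\mathcal{G}_{L_k}$. Finally it invokes \cite[Theorem 1]{FenFFKS2026}: for a finite language $L$, the $L$-representable graphs are not all graphs.

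Your version is self-contained and makes the quantitative content explicit. It directly yields the $2^{O(kn\log(kn))}$ speed bound, and hence \Cref{rem:factorial-speed}, without the external framework. In exchange, the paper's reduction embeds $2$-word-$\pi$-representability into the general theory of $L$-representable graphs. Further results proved there for finite languages then transfer to $\mathcal{G}_k$ automatically.

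Both arguments are non-constructive: neither exhibits a concrete graph outside $\mathcal{G}_k$. This matches the open problem the paper states for $\mathcal{G}_2$.
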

\else

\begin{proof}
	In \cite{FenFFKS2026} $h_{\ta,\tb}: \Sigma^* \to \{0,1\}^*$ was defined as the homomorphism that maps the letter $\ta$ to $0$, the letter $\tb$ to $1$ and deletes every other letter of a given alphabet $\Sigma$. For each language $L \subseteq \{0,1\}^*$ that is closed under the complement morphism $\tilde{\cdot}: \{0,1\}^* \to \{0,1\}^*, b \mapsto 1-b$, $G(L,w)$ is the graph that is $L$-represented by the word $w$.
	
	Let $L_k = \{ u^2 \mid u \in 0^k \shuffle 1^k\}$. Clearly $\tilde{L} = L$. We will show $\mathcal{G}_k \subseteq \mathcal{G}_{L_k}$. Let $G \in \mathcal{G}_k $. There exist $k$-uniform words $w,v \in V(G)^*$ such that $G = G(w,v)$. We will show that $G(w,v) = G(L,wv)$. 
	For each $\{\ta,\tb\} \in \binom{V}{2}$, 
	$
	\pi_{\ta,\tb}(w) = \pi_{\ta,\tb}(v) 
	\Leftrightarrow h_{\ta,\tb}(w) = h_{\ta,\tb}(v) 
	\Leftrightarrow h_{\ta,\tb}(wv) = h_{\ta,\tb}(w) \cdot h_{\ta,\tb}(v) \in L_k.
	$
	The last equivalence holds because $h_{\ta,\tb}(w),h_{\ta,\tb}(v) \in 0^k \shuffle 1^k$ since they are both $k$-uniform. This concludes the proof of $\mathcal{G}_k \subseteq \mathcal{G}_{L_k}$. According to \cite[Theorem 1]{FenFFKS2026} $\mathcal{G}_{L_k}$ is not the class of all graphs because $L_k$ is finite. \qed
\end{proof}
\fi

\begin{remark}\label{rem:factorial-speed}
	By \cite[Theorem 1]{FenFFKS2026} we can deduce that no graph class with superfactorial speed $2^{\Theta(n^2)}$ can be contained completely in any class $G_k$.  In particular, we know that no class $G_k$ for $k \in \N$ can contain all bipartite, split, cobipartite, word-representable, comparability or chordal graphs.
\end{remark}

\begin{conjecture}
For each $k \in \mathbb{N}$, the inclusion $\mathcal{G}_k \subseteq \mathcal{G}_{k+1}$ is proper. 
\end{conjecture}


	\section{Operations on Graphs}\label{operations}
	In the context of graph theory, it is reasonable to analyse the standard operations on graphs and how to realise 
them on graphs as defined in \Cref{def:graph_p} and their representing words. For this purpose, the operations of join, 
the insertion of universal vertices, union,  and the insertion of isolated vertices are considered.
This line of research
was also followed for the original definition (cf. \cite{DBLP:series/eatcs/KitaevL15,Choi_2018}) where the authors investigated which operations on graphs preserve word-representability. Since with our definitions all graphs are word-representable, the question is just how the words can be altered if we have operations on the graphs.
But first, we remark that a $1$-uniform word and its reversed  $2$-word-$\pi$-represent the empty graph.

\begin{remark}\label{thm:empty_graph_reversed_words}
	Let $w\in\Sigma^\ast$ be $1$-uniform and $|\Sigma| > 1$. 
	This implies immediately $\proj{\ta}{\tb}(w)\neq\proj{\ta}{\tb}(w^R)$ for all $\ta,\tb\in\alph(w)$, and therefore $G(w,w^R)$ is empty.
\end{remark}

\ifpaper
\else
\begin{definition}\label{def:union}
	Let $w,w',v,v'\in\Sigma^\ast$ with $\alph(w)\cap\alph(w')=\emptyset$ s.t. $G(w,v)$, $G(w',v')$ are $2$-word-$\pi$-representable graphs.
	Then $G(w,v)\cup G(w',v')$ is the \textit{union} of the two graphs defined by the set of vertices $V=V(G(w,v))\cup V(G(w',v'))$ and the set of edges $E(G(w,v)\cup G(w',v'))=E(G(w,v))\cup E(G(w',v'))$.
\end{definition}
\fi

For the rest of this section let $w,w',v,v'\in\Sigma^\ast$ with $\alph(w)\cap\alph(w')=\emptyset$, such that $G(w,v)$ and $G(w',v')$ are $2$-word-$\pi$-representable.
First, we investigate the join of two graphs.

\ifpaper
\begin{theorem}\label{thm:graph-join}
	If  $|w|_\ta=|v|_\ta$, $|w'|_\tb=|v'|_\tb$ for all $\ta\in\alph(w),\tb\in\alph(w')$ then the join $G(w,v) \nabla G(w',v')$ is given by $G(ww',vv')$.
\end{theorem}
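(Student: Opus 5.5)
We compare $G(ww',vv')$ and $G(w,v)\nabla G(w',v')$ directly by computing, for every pair of distinct vertices, the projection of both words. The vertex sets agree: $\alph(ww')=\alph(w)\cup\alph(w')=\alph(v)\cup\alph(v')=\alph(vv')$, and this is $V(G(w,v))\cup V(G(w',v'))$. For the edges we use that $\proj{\ta}{\tb}$ is a morphism, so $\proj{\ta}{\tb}(ww')=\proj{\ta}{\tb}(w)\proj{\ta}{\tb}(w')$ and likewise for $vv'$. We then split into three cases according to where the two letters lie.

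\emph{Case 1: $\ta,\tb\in\alph(w)$.} Since $\alph(w')\cap\alph(w)=\emptyset$, the second factors vanish, so $\proj{\ta}{\tb}(ww')=\proj{\ta}{\tb}(w)$ and $\proj{\ta}{\tb}(vv')=\proj{\ta}{\tb}(v)$. Here we need $\alph(v')\cap\alph(w)=\emptyset$, which follows from $\alph(v')=\alph(w')$. Hence $\{\ta,\tb\}$ is an edge of $G(ww',vv')$ iff it is an edge of $G(w,v)$.

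\emph{Case 2: $\ta,\tb\in\alph(w')$.} This is symmetric, with the first factors vanishing, so the edges among these letters are exactly those of $G(w',v')$.

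\emph{Case 3: $\ta\in\alph(w)$, $\tb\in\alph(w')$.} These pairs must all become edges of the join. Here $\proj{\ta}{\tb}(w)=\ta^{|w|_\ta}$ and $\proj{\ta}{\tb}(w')=\tb^{|w'|_\tb}$, so $\proj{\ta}{\tb}(ww')=\ta^{|w|_\ta}\tb^{|w'|_\tb}$. Similarly $\proj{\ta}{\tb}(vv')=\ta^{|v|_\ta}\tb^{|v'|_\tb}$. The hypothesis $|w|_\ta=|v|_\ta$ and $|w'|_\tb=|v'|_\tb$ makes these equal, so $\{\ta,\tb\}$ is an edge.

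Combining the three cases gives exactly $E(G(w,v))\cup E(G(w',v'))\cup\{\{\ta,\tb\}\mid \ta\in\alph(w),\tb\in\alph(w')\}$, which is the edge set of the join. The only point needing care is Case 3, where the letter-count hypothesis is used. Everything else is bookkeeping about disjoint alphabets, in particular that $v$ and $v'$ also have disjoint alphabets because $\alph(v)=\alph(w)$ and $\alph(v')=\alph(w')$ by definition of $2$-word-$\pi$-representability.
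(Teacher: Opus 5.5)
Your proof is correct and takes the same approach as the paper. Both arguments check that the vertex sets agree and that edges inside each alphabet are preserved because the alphabets are disjoint. Both then compute $\proj{\ta}{\tb}(ww')=\ta^{|w|_\ta}\tb^{|w'|_\tb}=\proj{\ta}{\tb}(vv')$ for cross pairs using the letter-count hypothesis. You are slightly more explicit than the paper in noting that $\alph(v)=\alph(w)$ and $\alph(v')=\alph(w')$ are needed on the $v$-side, which the paper uses tacitly.
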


\else

\begin{proof}
	First notice 
	\begin{align*}
	V(G(w,v) \nabla G(w',v')) &= V(G(w,v)) \cup V(G(w',v')) = \al(w) \cup \al(w')\\
	&= V(G(ww',vv')).
	\end{align*}
	
	Second, we are showing that $E(G(w,v) \nabla G(w',v')) = E(G(ww',vv'))$.
	Since $\alph(w)\cap\alph(w')=\emptyset$, we have $\pi_{\ta,\tb}(w)=\pi_{\ta,\tb}(ww')$ and $\pi_{\ta,\tb}(v)=\pi_{\ta,\tb}(vv')$ for all $\ta,\tb\in\alph(w)$, and $\pi_{\tc,\td}(w')=\pi_{\tc,\td}(ww')$ and $\pi_{\tc,\td}(v')=\pi_{\tc,\td}(vv')$ for all $\tc,\td\in\alph(w')$. Thus, $E(G(w,v)) \cup E(G(w',v'))\subseteq E(G(ww',vv'))$. Since $\alph(w)\cap\alph(w')=\emptyset$ there do not exist any additional edges for $\ta,\tb\in\alph(w)$ or $\tc,\td\in\alph(w')$ in $G(ww',vv')$.
	Further, for all $\ta\in\alph(w)$ and $\tc\in\alph(w')$ we have $\pi_{\ta,\tc}(ww')=\ta^{|w|_\ta}\tc^{|w'|_\tc}$. Therefore, with $|w|_\ta=|v|_\ta$ and $|w'|_\tc=|v'|_\tc$ for all $\ta\in\alph(w)$ and all $\tc\in\alph(w')$ we obtain $\pi_{\ta,\tc}(ww')=\pi_{\ta,\tc}(vv')$ and so $\{\ta,\tc\}\in E(G(ww',vv'))$.
	Thus, we conclude $E(G(ww',vv'))=E(G(w,v))\cup E(G(w',v'))\cup \{\{u,u'\} \mid u \in V(G(w,v)), u' \in V(G(w',v'))\}$.\qed
\end{proof}

\fi

\begin{remark}
	Note that we assume for realising graph operations that $|w|_\ta = |v|_\ta, |w'|_\tb = |v'|_\tb$ for all $\ta \in \al(w), \tb \in \al(w')$ holds. This is necessary to keep control if the projections of the concatenation $ww'$ and $vv'$. For example, consider $w = \mathtt{abbc}, v = \mathtt{abc}, w' = \mathtt{de}, v' = \mathtt{ed}$. Then $G(w,v) = (\{\ta,\tb,\tc\}, \{\{\ta,\tc\}\})$ and $G(w',v') = (\{\td,\te\}, \emptyset)$. With $ww' = \mathtt{abbcde}, vv' = \mathtt{abced}$ we get $G(ww',vv') = (\{\ta,\tb,\tc,\td,\te\},\{\{\ta,\tc\}, \{\ta,\td\}, \{\ta,\td\}, \{\ta,\te\}, \{\tc,\td\}, \{\tc,\te\}\})$. Nevertheless, a join of $G(w,v)$ and $G(w',v')$ must contain the edges $\{\tb,\td\}, \{\tb,\te\}$ by definition which is not the case since $|w|_\tb \neq |v|_\tb$.
\end{remark}

Let $w=\ta\td\tb\tc$, $v=\tb\ta\tc\td$, $w'=\te\tg\mathtt{h}\tf$ and $v'=\mathtt{h}\tg\te\tf$. Then the joined graph \Cref{graph:example_join_joined} of the graphs \Cref{graph:example_join_1,graph:example_join_2} is represented by the words $ww'=\ta\td\tb\tc\te\tg\mathtt{h}\tf$ and $vv'=\tb\ta\tc\td\mathtt{h}\tg\te\tf$.

\begin{figure}
	\begin{minipage}{0.5\textwidth}
		\centering
		\begin{tikzpicture}
			\node (a) {$\ta$};
			\node[right of=a] (b) {$\tb$};
			\node[below of=a] (c) {$\tc$};
			\node[right of=c] (d) {$\td$};
			\draw[dotted, line width=1.0pt] (a) -- (d);
			\draw[dotted, line width=1.0pt] (a) -- (c);
			\draw[dotted, line width=1.0pt] (b) -- (c);
		\end{tikzpicture}
		\caption{Graph $G(w,v)$}
		\label{graph:example_join_1}
	\end{minipage}%
	\begin{minipage}{0.5\textwidth}
		\centering
		\begin{tikzpicture}
			\node (e) {$\te$};
			\node[right of=e] (f) {$\tf$};
			\node[below of=e] (g) {$\tg$};
			\node[right of=g] (h) {$\mathtt{h}$};
			\draw[dashed, line width=1.0pt] (e) -- (f);
			\draw[dashed, line width=1.0pt] (f) -- (g);
			\draw[dashed, line width=1.0pt] (f) -- (h);
		\end{tikzpicture}
		\caption{Graph $G(w',v')$}
		\label{graph:example_join_2}
	\end{minipage}%
\end{figure}
\begin{figure}
	\centering
	\begin{tikzpicture}[node distance=2cm]
		\node (a1) {$\ta$};
		\node[right of=a1] (b1) {$\tb$};
		\node[below of=a1] (c1) {$\tc$};
		\node[right of=c1] (d1) {$\td$};
		\node[right of=b1] (a2) {$\te$};
		\node[right of=a2] (b2) {$\tf$};
		\node[below of=a2] (c2) {$\tg$};
		\node[below of=b2] (d2) {$\mathtt{h}$};
		\draw[dotted, line width=1.0pt] (a1) -- (d1);
		\draw[dotted, line width=1.0pt] (a1) -- (c1);
		\draw[dotted, line width=1.0pt] (b1) -- (c1);
		\draw[dashed, line width=1.0pt] (a2) -- (b2);
		\draw[dashed, line width=1.0pt] (b2) -- (c2);
		\draw[dashed, line width=1.0pt] (b2) -- (d2);
		\draw [bend left] (a1) to (a2);
		\draw [bend left] (a1) to (b2);
		\draw (a1) -- (c2);
		\draw (a1) -- (d2);
		\draw (b1) -- (a2);
		\draw [bend left] (b1) to (b2);
		\draw (b1) -- (c2);
		\draw (b1) -- (d2);
		\draw (c1) -- (a2);
		\draw (c1) -- (b2);
		\draw [bend right] (c1) to (c2);
		\draw [bend right] (c1) to (d2);
		\draw (d1) -- (a2);
		\draw (d1) -- (b2);
		\draw (d1) -- (c2);
		\draw [bend right] (d1) to (d2);
	\end{tikzpicture}
	\caption{Join $G(w,v)\nabla G(w',v')$}
	\label{graph:example_join_joined}
	\centering
	\begin{tikzpicture}[node distance=2cm]
		\node (a1) {$\ta$};
		\node[right of=a1] (b1) {$\tb$};
		\node[below of=a1] (c1) {$\tc$};
		\node[right of=c1] (d1) {$\td$};
		\node[right of=b1] (a2) {$\te$};
		\node[right of=a2] (b2) {$\tf$};
		\node[below of=a2] (c2) {$\tg$};
		\node[below of=b2] (d2) {$\mathtt{h}$};
		\draw[dotted, line width=1.0pt] (a1) -- (d1);
		\draw[dotted, line width=1.0pt] (a1) -- (c1);
		\draw[dotted, line width=1.0pt] (b1) -- (c1);
		\draw[dashed, line width=1.0pt] (a2) -- (b2);
		\draw[dashed, line width=1.0pt] (b2) -- (c2);
		\draw[dashed, line width=1.0pt] (b2) -- (d2);
	\end{tikzpicture}
	\caption{Union of $G(w,v)$ and $G(w',v')$}
	\label{graph:example_union}
\end{figure}
	
\begin{remark}	
Note that the well-known graph operation \emph{insertion of a universal vertex} can be implemented in the following way. Since the graph of an isolated vertex $G(\tx,\tx)$ for some $\tx \in \Sigma$ can be joined with a given $2$-word-$\pi$-representable graph: let $w,v \in \Sigma^*$ with $|w|_\ta = |v|_\ta$ for all $\ta \in \al(w)$ and let $\tx \in \Sigma \setminus \al(w)$. Then the insertion of the universal vertex $\tx$ into $G(w,v)$ is given by the join $G(w,v) \nabla G(\tx,\tx) = G(w\tx,v\tx)$ (cf. \Cref{graph:example_insert_universal_vertex}).
\end{remark}

	\begin{figure}
		\begin{minipage}[b][][b]{0.48\textwidth}
		\centering
		\begin{tikzpicture}
			\node (a) {$\ta$};
			\node[right of=a] (b) {$\tb$};
			\node[below of=a] (c) {$\tc$};
			\node[right of=c] (d) {$\td$};
			\node[right of=b] (e) {$\tx$};
			\draw[dotted, line width=1.0pt] (a) -- (d);
			\draw[dotted, line width=1.0pt] (a) -- (c);
			\draw[dotted, line width=1.0pt] (b) -- (c);
			\draw [bend left] (a) to (e);
			\draw (b) -- (e);
			\draw (c) -- (e);
			\draw (d) -- (e);
		\end{tikzpicture}
		\caption{Insertion of universal vertex $\tx$ in $G(w,v)$ from \Cref{graph:example_join_1}}
		\label{graph:example_insert_universal_vertex}
		\end{minipage}%
		\hfill
		\begin{minipage}[b][][b]{0.48\textwidth}
		\centering
		\begin{tikzpicture}
			\node (a) {$\ta$};
			\node[right of=a] (b) {$\tb$};
			\node[below of=a] (c) {$\tc$};
			\node[right of=c] (d) {$\td$};
			\node[right of=b] (e) {$\te$};
			\draw[dotted, line width=1.0pt] (a) -- (d);
			\draw[dotted, line width=1.0pt] (a) -- (c);
			\draw[dotted, line width=1.0pt] (b) -- (c);
		\end{tikzpicture}
		\caption{Insertion of isolated vertex $\te$ in $G(w,v)$ from \Cref{graph:example_join_1}}
		\label{graph:example_insert_isolated_vertex}
		\end{minipage}%
	\end{figure}

\ifpaper
\else

\fi

Next, we tackle the graph union. Note that the only difference (despite the dropped condition on the same number of the word's letters) is that we use $v'v$ instead of $vv'$.

\ifpaper
\begin{theorem}\label{thm:graph-union}
	We have $G(w,v) \cup G(w',v') = G(ww',v'v)$.
\end{theorem}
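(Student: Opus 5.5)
The plan is to verify vertex sets first and then check edges by splitting vertex pairs into three types. The vertex set of $G(ww',v'v)$ is $\alph(ww')=\alph(w)\cup\alph(w')$. Since $\alph(v)=\alph(w)$ and $\alph(v')=\alph(w')$, this is also $\alph(v'v)$, so the graph is well defined. It coincides with $V(G(w,v))\cup V(G(w',v'))$.

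For the edges, I would take a pair $\{\ta,\tb\}$ and distinguish three cases.

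\emph{Case 1: both letters lie in $\alph(w)$.} Because $\alph(w)\cap\alph(w')=\emptyset$, the projection $\proj{\ta}{\tb}$ erases $w'$ and $v'$ completely. Hence $\proj{\ta}{\tb}(ww')=\proj{\ta}{\tb}(w)$ and $\proj{\ta}{\tb}(v'v)=\proj{\ta}{\tb}(v)$. The pair is therefore an edge of $G(ww',v'v)$ iff it is an edge of $G(w,v)$.

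\emph{Case 2: both letters lie in $\alph(w')$.} This is symmetric to Case 1: $w$ and $v$ are erased, and the pair is an edge iff it is an edge of $G(w',v')$.

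\emph{Case 3: $\ta\in\alph(w)$ and $\tb\in\alph(w')$.} This is the only step needing an actual argument. In $ww'$ every occurrence of $\ta$ precedes every occurrence of $\tb$, so
\[
\proj{\ta}{\tb}(ww')=\ta^{|w|_\ta}\tb^{|w'|_\tb},
\]
which begins with $\ta$ since $|w|_\ta\ge 1$. In $v'v$ every occurrence of $\tb$ precedes every occurrence of $\ta$, so
\[
\proj{\ta}{\tb}(v'v)=\tb^{|v'|_\tb}\ta^{|v|_\ta},
\]
which begins with $\tb$ since $\tb\in\alph(v')$. The first letters differ, so the projections differ and no edge is created between the two parts. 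No assumption on letter counts is needed here, in line with the remark that the condition from \Cref{thm:graph-join} is dropped.

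Combining the three cases gives $E(G(ww',v'v))=E(G(w,v))\cup E(G(w',v'))$, which is exactly the edge set of the union. I expect no real obstacle: the only thing to watch is that both letters genuinely occur in the relevant words, which holds because all alphabets are nonempty on the respective vertex sets.
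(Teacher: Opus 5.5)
Your proof is correct and follows the same route as the paper: when both letters lie on one side, disjointness of the alphabets makes the projections of the concatenations equal to those of the original words, and for a mixed pair the projections $\ta^{|w|_\ta}\tb^{|w'|_\tb}$ and $\tb^{|v'|_\tb}\ta^{|v|_\ta}$ differ. You state Cases 1 and 2 as equivalences, which also shows that non-edges inside each part stay non-edges; the paper argues only an inclusion there and leaves this direction implicit.
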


\else

\begin{proof}
	By definition, we have $V(G(w,v) \cup G(w',v')) = V(G(ww',v'v))$.
	Thus, we continue with showing that $E(G(w,v) \cup G(w',v')) = E(G(ww',v'v))$.
	Since $\alph(w)\cap\alph(w')=\emptyset$, we have $E(G(w,v))\subseteq E(G(ww',v'v))$. The same applies to every edge from $E(G(w',v'))$ being contained in $E(G(ww',v'v))$. Furthermore, $\pi_{\ta,\tb}(ww')=\ta^{|w|_\ta}\tc^{|w'|_\tc}\neq\tc^{|v'|_\tc}\ta^{|v|_\ta}=\pi_{\ta,\tb}(v'v_1)$ for all $\ta\in\alph(w), \tc\in\alph(w')$. So there are no edges between any $\ta\in\alph(w)$ and $\tb\in\alph(w')$. Thus, $G(ww',v'v')=G(w,v)\cup G(w',v')$ holds.\qed
\end{proof}
\fi

The union of the graphs depicted in \Cref{graph:example_join_1,graph:example_join_2} is $2$-word-$\pi$-represented by
$ww' = \ta\td\tb\tc\te\tg\mathtt{h}\tf$ and $v'v = \mathtt{h}\tg\te\tf\tb\ta\tc\td$ (cf. \Cref{graph:example_union}).





\begin{remark}
Note that the operation \emph{insertion of an isolated vertex} in a graph $G(w,v)$ for suitable $w,v \in \Sigma^*$ is a union of $G(w,v)$ and $G(\tx,\tx)$ for $\tx \notin \al(w)$. Thus, if we insert the isolated vertex $\te$
to the graph (\Cref{graph:example_join_1}), we obtain the graph 
represented by the words $w=\ta\td\tb\tc\te$ and $v=\te\tb\ta\tc\td$ (\Cref{graph:example_insert_isolated_vertex}).
\end{remark}

We finish this work by returning to graphs that are $2$-word-$\pi$-represented by $1$-uniform words. Cographs are a subclass of the permutations graphs (and thus they are $2$-word-$\pi$-representable by $1$-uniform words) and defined as follows \cite{doi:10.1137/1.9780898719796,CorLerBur81}.

\begin{definition}
	A \emph{cograph} is defined inductively as follows:\\
	(1) A graph on a single vertex is a cograph.\\
	(2) If $G_1, G_2, \ldots G_k$ are cographs, then so is their union $G_1 \cup G_2 \cup \cdots \cup G_k$.\\
	(3) If $G$ is a cograph, then so is its complement. 
\end{definition}

A construction of two representing words that relies on the afore mentioned graph operations can be done for cographs.

\begin{construction}\label{construction}
	Let $G$ be a cograph with one vertex, that is $G = (\{\ta\}, \emptyset)$ for a letter $\ta$. Then $w= \ta$ and $v= \ta$
	$2$-word-$\pi$-represent $G$. 
	Consider cographs $G_1$ and $G_2$ with $V(G_1) \cap V(G_2) = \emptyset$. 
	For each $i \in \{1,2\}$, words $w_i,v_i \in V(G_i)^\ast$ exist such that $w_i$ and $v_i$ $2$-word-$\pi$-represent $G_i$. If $G$ is the graph union of $G_1$ and $G_2$, we have that $w = w_1 w_2$ and $v = v_2 v_1$ $2$-word-$\pi$-represent $G$ (\Cref{thm:graph-union}). 
	Consider a cograph $G'$ and words $w$ and $v$ that $2$-word-$\pi$-represent $G'$. If $\overline G$ is the complement graph of $G'$, $w$ and $v^R$ $2$-word-$\pi$-represent $\overline G$ (\Cref{rem:permutation-graphs-complement}). 
	Note that all words constructed this way are 1-uniform. 
\end{construction}

Since the definition of cographs is defined by graph operations, the word-representability of these graphs follows elegantly from the insight of the operaitons on graphs.


	\section{Conclusion}
	In this work, we generalised the notion of word-representability from Kitaev and Pyatkin \cite{KitPya2008} to two words.
We defined that there is an edge between two nodes $\ta$ and $\tb$ iff the projections onto these two letters of both 
words are equal. The first result is that we can represent all graphs with two words both of length at most 
$|V|\cdot|\overline{E}|$ for the nodes of the represented graph $V$ and the edge set $\overline{E}$ 
of the complement graph. This is indeed worse than the well-known adjacency matrix to store graphs. 
Thus, we investigated which graphs are $2$-word-$\pi$-representable by $1$-uniform words and identified 
them as exactly the permutation graphs. This lead to the definition of $\mathcal{G}_k$, the set of 
all graphs $2$-word-$\pi$-representable by two $k$-uniform words. We proceeded by giving some insights 
about these classes and especially $\mathcal{G}_2$. We finished our work by investigating the classic 
graph operations union and join. This leads, interestingly to an elegant way to construct two words 
$2$-word-$\pi$-representing the class of cographs - a subclass of the permutation graphs. It remains 
future work to characterise the classes $\mathcal{G}_k$ in more detail. 
Moreover, the problem of given a graph and a word, determine a second word, such that both together $2$-word-$\pi$-represent the graph is in our opinion an interesting problem that should be pursued. We did not find an example of a graph $G$ such that $G \notin \mathcal{G}_2$. By Remark~\ref{rem:factorial-speed} we know that such a graph exists. Finding such a graph remains an open problem. Furthermore, if the classes $\mathcal{G}_k$ are an infinite hierarchy, another problem that we suggest is to determine the smallest $k$ for a given graph $G$ such that $G \in \mathcal{G}_k$. 
	
	
	\bibliographystyle{splncs04}
	\bibliography{hen}
	





\end{document}